\documentclass[12pt,a4paper,oneside]{article}
\usepackage[margin=1in]{geometry}
\usepackage{amsmath,amssymb,amsthm,mathtools}
\usepackage[bookmarks=true,colorlinks=true,citecolor=blue,linkcolor=black,urlcolor=blue]{hyperref}
\numberwithin{equation}{section}

\theoremstyle{plain}
\newtheorem{theorem}{Theorem}[section]
\newtheorem{lemma}[theorem]{Lemma}
\newtheorem{proposition}[theorem]{Proposition}
\theoremstyle{definition}
\newtheorem{definition}[theorem]{Definition}
\newcommand{\T}{\mathcal T}
\newcommand{\R}{\mathbb R}

\newcommand{\wt}{\widetilde}
\DeclareMathOperator{\dist}{dist}

\title{Infinite Combinatorial Yamabe Flows in Three Dimensions}
\author{Bohao Ji}
\date{}

\begin{document}
\maketitle

\begin{abstract}
In this paper, we study three-dimensional combinatorial Yamabe flows on locally finite infinite triangulations in Euclidean and hyperbolic background geometries.  Under suitable non-degeneracy and bounded-degree assumptions, we establish the short-time existence and uniqueness of the original flows.  We further introduce the extended flows by using the continuous extension of solid angles, and prove long-time existence for both extended flows under suitable initial assumptions.
\end{abstract}

\section{Introduction}
The Yamabe problem asks whether a Riemannian metric can be conformally deformed to one with constant scalar curvature.  Its resolution in the smooth compact case, due to Yamabe, Trudinger, Aubin, and Schoen, is a central result in conformal geometry; see \cite{Yamabe1960,Schoen1984}.  The parabolic approach to curvature prescription, represented by Hamilton's work on curvature flows \cite{Hamilton1988}, has an equally important discrete counterpart.  In discrete conformal geometry, smooth metrics are replaced by metrics on triangulated or polyhedral spaces, and curvature is represented by angle defects.

Circle packing theory initiated by Thurston \cite{Thurston1980} and developed by Rodin--Sullivan \cite{RodinSullivan1987}, He--Schramm \cite{HeSchramm1995}, and He \cite{He1999} provides a fundamental discrete analogue of conformal geometry on surfaces.  On finite triangulated surfaces, Chow and Luo introduced combinatorial Ricci flows for circle packing metrics \cite{ChowLuo2003}, and Luo introduced combinatorial Yamabe flow for PL metrics \cite{Luo2004}.  Later discrete uniformization results were obtained by Gu, Luo, Sun, and Wu \cite{GuLuoSunWu2018,GuGuoLuoSunWu2018}, and convergence to the classical theory was studied by Luo, Sun, and Wu \cite{LuoSunWu2022}.  Recent work of Ge, Hua, and Zhou on combinatorial Ricci flows on infinite disk triangulations \cite{GeHuaZhou2025} shows that finite exhaustion and maximum-principle arguments can be used effectively in this noncompact discrete setting.

In dimension three, Cooper and Rivin introduced ball-packing metrics and a combinatorial scalar curvature given by solid-angle defects at vertices \cite{CooperRivin1996}.  Glickenstein defined the Euclidean three-dimensional combinatorial Yamabe flow, derived the curvature evolution equation, and identified the associated dual-area Laplacian \cite{Glickenstein2005,Glickenstein2011}.  Ge, Jiang, and Shen studied extended Euclidean ball-packing flows through finite-time degenerations \cite{GeJiangShen2022}.  In hyperbolic background geometry, Ge and Hua introduced the corresponding three-dimensional Yamabe flow and proved finite-dimensional long-time existence, uniqueness, and convergence or non-existence results related to tetra-degree conditions \cite{GeHua2020}.  These results concern finite triangulations; the goal of this paper is to establish basic well-posedness and extended-flow existence results for locally finite infinite triangulations.

We now describe the objects studied in this paper.  Let $\T=(\T_0,\T_1,\T_2,\T_3)$ be a locally finite triangulation of a three-manifold without boundary.  For a vertex $i$, let $q_i$ be the graph degree of $i$, namely the number of adjacent vertices, and let $d_i$ be the number of tetrahedra incident to $i$.  We say that $\T$ has bounded degree if $\sup_{i\in\T_0}q_i<\infty$.  This also gives a uniform bound for $d_i$.  A ball-packing metric is a function
\[
 r:\T_0\to(0,\infty),
\]
and each edge $ij\in\T_1$ is assigned length $l_{ij}=r_i+r_j$.  A tetrahedron is called real if these six edge lengths are realized by a non-degenerate geodesic tetrahedron in the background geometry; otherwise it is called virtual.  For a Euclidean tetrahedron $\{i,j,k,l\}$, set $x_a=1/r_a$.  The tetrahedron is real precisely when
\[
 Q^E_{ijkl}:=(x_i+x_j+x_k+x_l)^2-2(x_i^2+x_j^2+x_k^2+x_l^2)>0.
\]
For a hyperbolic tetrahedron, set $c_a=\coth r_a$; the corresponding real condition is
\[
 Q^H_{ijkl}:=(c_i+c_j+c_k+c_l)^2-2(c_i^2+c_j^2+c_k^2+c_l^2)+4>0.
\]
If all incident tetrahedra are real, the Cooper--Rivin curvature at a vertex $i$ is
\[
 K_i=4\pi-\sum_{\{i,j,k,l\}\in\T_3}\alpha_{ijkl},
\]
where $\alpha_{ijkl}$ is the solid angle at $i$ in the tetrahedron $\{i,j,k,l\}$.  The Euclidean and hyperbolic combinatorial Yamabe flows are respectively
\[
 \frac{dr_i}{dt}=-K_i r_i,\qquad
 \frac{dr_i}{dt}=-K_i\sinh r_i.
\]
Equivalently, in the conformal variables $u_i=\log r_i$ and $w_i=\log\tanh(r_i/2)$, both original flows take the form $u_i'=-K_i$ or $w_i'=-K_i$.

\begin{definition}[Strong non-degenerate conditions]\label{def:intro-strong}
Let $r$ be a ball-packing metric on $\T$.  For a vertex $a$ of a tetrahedron, write $x_a=1/r_a$ in Euclidean background geometry and $c_a=\coth r_a$ in hyperbolic background geometry.
\begin{enumerate}
\item In Euclidean background geometry, $r$ satisfies the strong non-degenerate condition I if, for every ordered tetrahedron $\{i,j,k,l\}\in\T_3$,
\[
 x_i\le x_j+x_k+x_l .
\]
It satisfies the strong non-degenerate condition II if there exist constants $\varepsilon,C>0$ such that, for every ordered tetrahedron $\{i,j,k,l\}\in\T_3$,
\[
 \varepsilon\le x_j+x_k+x_l-x_i\le C.
\]
\item In hyperbolic background geometry, $r$ satisfies the strong non-degenerate condition I if, for every ordered tetrahedron $\{i,j,k,l\}\in\T_3$,
\[
 c_i\le c_j+c_k+c_l .
\]
\end{enumerate}
\end{definition}

We divide the main results according to the three analytic questions treated in the paper.

\begin{theorem}[Short-time existence]\label{thm:intro-short}
Let $\T$ be a locally finite triangulation with bounded degree.
\begin{enumerate}
\renewcommand{\labelenumi}{(\arabic{enumi})}
\item In Euclidean background geometry, let $r^0:\T_0\to(0,\infty)$ be real and assume that there exists $\rho>0$ such that $e^v r^0$ is real for every $v:\T_0\to\R$ with $\|v\|_{\ell^\infty}\le\rho$.  Then there exist $T_E>0$ and a solution $r(t)$ of
\[
 \frac{dr_i}{dt}=-K_i r_i,\qquad r_i(0)=r_i^0,
\]
for all $i\in\T_0$ and $t\in[0,T_E)$.
\item In hyperbolic background geometry, let $r^0:\T_0\to(0,\infty)$ and set $w_i^0=\log\tanh(r_i^0/2)$.  Assume that there exists $\rho>0$ such that, for every $v:\T_0\to\R$ with $\|v\|_{\ell^\infty}\le\rho$, one has $w_i^0+v_i<0$ for all $i$ and the metric
\[
 r_i(v)=2\operatorname{arctanh}\bigl(e^{w_i^0+v_i}\bigr)
\]
is real.  Then there exist $T_H>0$ and a solution $r(t)$ of
\[
 \frac{dr_i}{dt}=-K_i\sinh r_i,\qquad r_i(0)=r_i^0,
\]
for all $i\in\T_0$ and $t\in[0,T_H)$.
\end{enumerate}
\end{theorem}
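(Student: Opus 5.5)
We rewrite both flows in conformal variables and treat them as ODEs in the Banach space $\ell^\infty(\T_0)$, obtaining a solution by the Picard--Lindel\"of theorem on a small ball around the initial data. In the Euclidean case set $u_i=\log r_i$. Looking for $u(t)=u^0+v(t)$ with $u^0=\log r^0$, the flow becomes
\[
 v_i'=-K_i\bigl(e^{v}r^0\bigr)=:F_i(v),\qquad v(0)=0 .
\]
In the hyperbolic case we write $w(t)=w^0+v(t)$ and $r_i(v)=2\operatorname{arctanh}(e^{w^0_i+v_i})$, and get $v'=F(v)$ with $F_i(v)=-K_i(r(v))$. A direct computation gives $dw_i/dr_i=1/\sinh r_i$, so this system is exactly the hyperbolic flow $dr_i/dt=-K_i\sinh r_i$. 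By hypothesis, every $v$ in the closed ball $B_\rho=\{\|v\|_{\ell^\infty}\le\rho\}$ yields a real metric, so $F$ is well defined on $B_\rho$.

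First, $F$ is bounded on $B_\rho$. Each solid angle lies in $(0,4\pi)$ and $d_i\le D$ uniformly by bounded degree, so $|F_i|\le 4\pi+4\pi D$.

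Second, and this is the main obstacle, $F$ is Lipschitz on $B_\rho$ in the $\ell^\infty$ norm. The function $F_i$ depends only on the coordinates $v_j$ with $j=i$ or $j\sim i$, and there are at most $q_i+1\le Q+1$ of these. Its partial derivatives are $\partial F_i/\partial v_j=-\sum\partial\alpha_{ijkl}/\partial v_j$, a sum of at most $D$ terms. This gives
\[
 \|F(v)-F(\tilde v)\|_\infty\le (Q+1)\,\sup_{B_\rho}\max_{i,j}\Bigl|\frac{\partial F_i}{\partial v_j}\Bigr|\;\|v-\tilde v\|_\infty ,
\]
using the mean value theorem along the segment. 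The segment stays in $B_\rho$ by convexity, so every metric on it is real. What remains is a uniform bound on $|\partial\alpha_{ijkl}/\partial u_j|$ (respectively $\partial/\partial w_j$) over all tetrahedra and all $v\in B_\rho$.

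For this bound I would use Glickenstein's formula in the Euclidean case, and its Ge--Hua analogue in the hyperbolic case. Solid angles are scale invariant in the Euclidean setting, so $\alpha$ depends on $r$ only through ratios. The derivative is expressed through dihedral angles and dual areas, and its size is controlled by $1/\sqrt{Q_{ijkl}}$ times quantities homogeneous in the $x_a$. The hypothesis says that the whole $\rho$-neighborhood is real. The plan is to exploit this openness margin: I would show that the normalized quantity $Q^E/(\sum x_a)^2$ is bounded below uniformly on $B_{\rho/2}$. If this lower bound failed along a sequence, then, after normalizing each tetrahedron's $x$ to lie on the simplex and passing to a limit by compactness, we would obtain a configuration with $Q\le0$ within $\ell^\infty$-distance $\rho/2$ of a real point of $B_{\rho/2}$ in log coordinates. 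This contradicts realness on $B_\rho$. The point is that the degeneracy set $\{Q=0\}$ is at log-distance at least $\rho/2$ from $B_{\rho/2}$, and this translates into a uniform lower bound for the normalized $Q$.

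The hyperbolic case goes the same way with $c_a=\coth r_a$ and the $+4$ term. The condition $w^0+v<0$ on $B_\rho$ gives $r$ bounded away from $\infty$ on $B_{\rho/2}$ only relative to $e^{w^0_i}$, so I would check that the derivative formulas remain bounded as $r\to0$ or $r\to\infty$, using compactness of the closure in $c\in[1,\infty]$ coordinates. With Lipschitz constant $L$ and sup bound $M$ on $B_{\rho/2}$, Picard iteration in $C([0,T],B_{\rho/2})$ with $T<\rho/(2M)$ gives a solution $v(t)$. Transforming back gives $r(t)$, which satisfies the stated flow coordinatewise on $[0,T_E)$ or $[0,T_H)$.
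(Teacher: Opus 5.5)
Your route differs from the paper's. Picard--Lindel\"of in $\ell^\infty(\T_0)$ needs a Lipschitz constant for $F$ on $B_{\rho/2}$ that is uniform over all vertices and tetrahedra.

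\textbf{Euclidean case.} Here that constant is available, and it is exactly the estimate the paper proves in its short-time argument: $q_\tau\ge q_0(\rho)$ for $\|u\|_{\ell^\infty}\le\rho/2$; Glickenstein's formula gives $|\partial\alpha_{ijkl}/\partial u_j|\le 1/\sqrt{q_0}$; and $\sum_a\partial\alpha_{ijkl}/\partial u_a=0$ controls the diagonal. So this half is a valid alternative, and it also gives local uniqueness among solutions staying in $B_{\rho/2}$. One point to tighten: your compactness argument for $q_0$ must handle limit shapes on the boundary of the simplex such as $(\tfrac12,\tfrac12,0,0)$. There the first-order multiplicative variation of $Q^E$ vanishes, but a second-order check still yields a virtual perturbation.

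\textbf{Hyperbolic case.} This half has a genuine gap: you never obtain a uniform bound on $|\partial\alpha_{ijkl}/\partial w_a|$ over $B_{\rho/2}$, and the tool you point to does not give one.
\begin{itemize}
\item The danger is small radii, not large ones. The hypothesis $w^0+v<0$ on $B_\rho$ forces $w^0_i<-\rho$, so radii are uniformly bounded \emph{above} on $B_{\rho/2}$. Nothing, however, prevents $r\to0$, at different rates in different tetrahedra.
\item You cannot normalize to a simplex: hyperbolic solid angles are not scale invariant and $Q^H$ is not homogeneous.
\item Nothing controls the diagonal derivative. There is no identity $\sum_a\partial\alpha/\partial w_a=0$, because the hyperbolic Schl\"afli formula carries a volume term.
\item Compactness of $[1,\infty]^4$ in $c$-coordinates does not help as stated. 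The derivatives do not extend continuously to points where some $c_a=\infty$; their limits depend on relative rates, for example on the limiting Euclidean shape when all $c_a\to\infty$ together.
\item The paper's uniform hyperbolic estimates (Lemma~\ref{lem:Hbounds}) require $r_a\ge r_*$, which is not assumed in this theorem.
\end{itemize}

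\textbf{How the paper avoids this.} It solves finite Dirichlet problems on an exhaustion. It uses only $|K_i|\le C_M$ to keep $\|\xi^{(n)}(t)\|_{\ell^\infty}<\rho/2$ for $t\le t_0$, so every approximation stays real and is vertex-wise equicontinuous. It then extracts a limit by Arzel\`a--Ascoli and a diagonal argument. Finally, it passes to the limit in the integral equation using only that each $K_i$ is smooth in finitely many local variables on a compact local set. The only derivative bounds it needs are the per-vertex constants $L_i^H$, which come for free, because compactness is taken vertex by vertex in the product topology rather than in $\ell^\infty$.

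To keep your approach in the hyperbolic case, you would need a genuine two-scale estimate. For instance, you could show that the $\rho/2$-margin forces the three largest $c_a$ in each tetrahedron to be comparable, and then bound the derivatives uniformly through a Euclidean rescaling limit. Otherwise, switch to the exhaustion--compactness argument for that case.
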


\begin{theorem}[Uniqueness]\label{thm:intro-unique}
Let $\T$ be a locally finite triangulation with bounded degree.
\begin{enumerate}
\renewcommand{\labelenumi}{(\arabic{enumi})}
\item Let $r(t)$ and $\hat r(t)$ be two Euclidean solutions on $[0,T]$ with the same initial data.  If, for every $s\in[0,1]$, the logarithmic interpolation
\[
 u^s(t)=s\log r(t)+(1-s)\log\hat r(t)
\]
defines a metric $r^s(t)$ by $r_i^s(t)=e^{u_i^s(t)}$ satisfying the strong non-degenerate condition II uniformly on $[0,T]$, then $r(t)\equiv\hat r(t)$.
\item Let $r(t)$ and $\hat r(t)$ be two hyperbolic solutions on $[0,T]$ with the same initial data.  Put $w_i(t)=\log\tanh(r_i(t)/2)$ and $\hat w_i(t)=\log\tanh(\hat r_i(t)/2)$.  For $s\in[0,1]$, let
\[
 w^s(t)=s w(t)+(1-s)\hat w(t)
\]
and define $r_i^s(t)=2\operatorname{arctanh}(e^{w_i^s(t)})$.  Suppose that there exists $r_*>0$ such that $r_i^s(t)\ge r_*$ and $r^s(t)$ satisfies the hyperbolic strong non-degenerate condition I for every $s\in[0,1]$ and $t\in[0,T]$.  Then $r(t)\equiv\hat r(t)$.
\end{enumerate}
\end{theorem}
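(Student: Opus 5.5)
The plan is to work in conformal variables and run a weighted $\ell^2$-type (or $\ell^\infty$ with exhaustion) energy argument for the difference of the two solutions. In the Euclidean case set $u=\log r$, $\hat u=\log\hat r$, so that $u_i'=-K_i(u)$ and $\hat u_i'=-K_i(\hat u)$. Let $\phi=u-\hat u$. By the mean value theorem along the segment $u^s$,
\[
 \phi_i'=-\int_0^1\sum_{j}\frac{\partial K_i}{\partial u_j}(u^s)\,ds\;\phi_j
 =:\sum_j L_{ij}(t)\phi_j .
\]
Since each $u^s(t)$ is real (condition II implies $Q^E>0$), $K_i$ is differentiable along the whole segment and this representation is legitimate. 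By Glickenstein's formula, $\partial K_i/\partial u_j$ for $j\sim i$ is minus a dual-area/edge-length weight, and $\partial K_i/\partial u_i=-\sum_{j\neq i}\partial K_i/\partial u_j$ by scale invariance of Euclidean solid angles. Hence $L$ is a time-dependent graph Laplacian $\sum_j \omega_{ij}(\phi_j-\phi_i)$ with symmetric weights $\omega_{ij}(t)$, possibly of either sign.

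The first key step is a uniform bound $|\omega_{ij}|\le C_0$ on $[0,T]$, independent of $i,j,s$. Here the strong non-degenerate condition II enters. Write the derivative of solid angles in terms of $x_a=1/r_a$. The quantities $x_j+x_k+x_l-x_i\in[\varepsilon,C]$ control $Q^E_{ijkl}$ from below and the $x_a$ from above, since summing two such inequalities bounds each $x_a$. Because solid-angle derivatives with respect to $u$ are homogeneous of degree zero in $x$, the relevant ratios are then controlled, so the partial derivatives stay bounded uniformly over all tetrahedra. Combined with bounded degree, this gives $\sum_j|\omega_{ij}|\le C_1$. The hyperbolic case is analogous with $w=\log\tanh(r/2)$. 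There scale invariance fails, so $L$ has an additional diagonal zeroth-order term. The hypothesis $r^s\ge r_*$ together with condition I for $c=\coth r$ gives $1<c_a\le\coth r_*$ and a lower bound on $Q^H$ via the $+4$ term, which again bounds all derivatives $\partial K_i/\partial w_j$ uniformly.

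The second step is the uniqueness argument itself for the linear system $\phi'=L(t)\phi$, $\phi(0)=0$, with $\|L(t)\|_{\ell^\infty\to\ell^\infty}\le C_2$. I would first check that $\phi(t)\in\ell^\infty$. On the Euclidean side, condition II bounds $x_a$ above, and a lower bound follows because otherwise the inequality $x_j+x_k+x_l-x_i\ge\varepsilon$ could fail for the ordering where $i$ is largest together with an upper bound. The hyperbolic side is handled similarly. Granting this, $L$ is a bounded operator on $\ell^\infty$ and the equation holds coordinatewise. Set $M(t)=\sup_i|\phi_i(t)|$; integrating gives
\[
 |\phi_i(t)|\le\int_0^t C_2M(\tau)\,d\tau,
\]
and Gronwall then forces $M\equiv0$.

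I expect the main obstacle to be the uniform derivative bounds, in particular controlling the off-diagonal weights near virtual-degenerate configurations and establishing that $\phi$ is bounded. If global $\ell^\infty$ boundedness of $\phi$ is not automatic, the fallback is a local Gronwall argument on balls $B_R$. Using finite propagation-type estimates, $|\phi_i(t)|\le (C_2t)^n/n!\cdot\sup|\phi|$ at combinatorial distance $n$, one shows $\phi=0$ on short intervals $t<1/(eC_2)$ and then iterates up to time $T$.
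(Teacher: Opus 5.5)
Your overall strategy works, but one step fails as written: the argument that $\phi=u-\hat u$ is bounded. In the Euclidean case, condition II bounds $x_a$ from above, since adding the margins for two orderings gives $x_k+x_l\le C$. It does not bound $x_a$ from below. Take $x_i=\eta\to0$ and $x_j=x_k=x_l=1$: the margins are $3-\eta$ and $1+\eta$, all in $[\varepsilon,C]$ when $\varepsilon\le1$ and $C\ge3$. So radii may be unbounded, and $\log r_i-\log\hat r_i$ cannot be controlled through $x$.

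This is not a technicality. On an infinite graph, uniqueness for $\phi'=L\phi$, $\phi(0)=0$, needs a growth condition even when $L$ is a bounded Laplacian; Tychonoff-type counterexamples exist already on $\mathbb Z$. Your fallback on balls still carries the factor $\sup|\phi|$, so it presupposes the bound it was meant to replace. The repair is the one-line estimate the paper uses: $\phi_i(0)=0$, and by Lemma~\ref{lem:curv-bound} and bounded degree $|K_i(u)|,|K_i(\hat u)|\le C_M$, so $|\phi_i(t)|\le 2C_Mt$. In the hyperbolic case boundedness also follows directly from $r,\hat r\ge r_*$, which puts $w,\hat w$ in $[\log\tanh(r_*/2),0)$.

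With that fix your proof is correct, and it takes a different route from the paper. The paper writes the difference equation as $g'=\Delta g+qg$ with $\omega_{ij}=\int_0^1\mu_{ij}(u^s)/r_i^s\,ds$. It proves $\omega_{ij}\ge0$ (Proposition~\ref{prop:E-mu-positive}, Lemma~\ref{lem:Epos}, and the positivity of $B_{ij}^{kl}$ in Lemma~\ref{lem:Hbounds}) together with a weighted-degree bound, and then invokes the maximum principle, Lemma~\ref{lem:mp}. You instead treat $L(t)$ as a uniformly bounded operator on $\ell^\infty$ and apply Gronwall to $M(t)=\sup_i|\phi_i(t)|$, which is lower semicontinuous and hence measurable.

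Your route needs only $\sup_{i,t}\sum_j|L_{ij}(t)|<\infty$ and never uses the sign of the weights. Nonnegative weights with bounded weighted degree, as in Lemma~\ref{lem:mp}, already make $\Delta$ bounded on $\ell^\infty$, so Gronwall is the more economical tool here. The maximum principle gives a one-sided comparison that uniqueness does not require. Consequently your argument would also work under a purely quantitative, scale-invariant non-degeneracy hypothesis such as $Q^E_\tau/S_\tau^2\ge q_0$ along the interpolation, even where some $\mu_{ij}$ are negative. In that setting the estimate in the proof of Theorem~\ref{thm:intro-short}(1) gives $|\partial\alpha_{ijkl}/\partial u_j|\le1/\sqrt{q_0}$.

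Under condition II one may take $q_0=\varepsilon/(2C)$, because $Q^E_\tau\ge\varepsilon S_\tau$ (Proposition~\ref{prop:E-strong-real}) and $S_\tau\le2C$. This is the precise content behind your ``homogeneous of degree zero'' remark, which as stated is only heuristic. In the hyperbolic case, the absolute bounds you need are exactly those of Lemma~\ref{lem:Hbounds}.

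Finally, the weights are not symmetric: $\partial K_i/\partial u_j=-\mu_{ij}/r_i$ while $\partial K_j/\partial u_i=-\mu_{ij}/r_j$. Your argument never uses symmetry, so this does not affect it.
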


To continue the flow after a tetrahedron becomes degenerate, one replaces the ordinary solid angles by their continuous extensions.  In the Euclidean case, when a virtual tetrahedron has one dominant vertex, the extended solid angle at that vertex is set to be $2\pi$ and the other three extended solid angles are set to be $0$; the hyperbolic case admits an analogous continuous extension.  Thus the extended curvature $\widetilde K_i$ is defined for arbitrary positive radii, including virtual configurations.

\begin{theorem}[Existence of extended flows]\label{thm:intro-extended}
Let $\widetilde\alpha$ denote the continuous extension of solid angles to real and virtual tetrahedra, and set $\widetilde K_i=4\pi-\sum\widetilde\alpha_{ijkl}$.
\begin{enumerate}
\renewcommand{\labelenumi}{(\arabic{enumi})}
\item For every locally finite $\T$ and every Euclidean initial metric $r^0:\T_0\to(0,\infty)$, there exists a global solution of
\[
 \frac{dr_i}{dt}=-\widetilde K_i r_i,\qquad r_i(0)=r_i^0,
\]
with $r_i\in C^1([0,\infty))$ for every $i\in\T_0$.
\item In hyperbolic background geometry, assume that $\T$ has bounded degree and that $\sup_i r_i^0<\infty$.  Then there exists a global solution of
\[
 \frac{dr_i}{dt}=-\widetilde K_i\sinh r_i,\qquad r_i(0)=r_i^0,
\]
with $r_i\in C^1([0,\infty))$ for every $i\in\T_0$.
\end{enumerate}
\end{theorem}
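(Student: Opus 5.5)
The plan is a finite exhaustion combined with a diagonal compactness argument. This works because the extended curvature $\widetilde K_i$ is uniformly bounded and depends only on finitely many neighbouring radii. Since each extended solid angle lies in $[0,2\pi]$, we have $|\widetilde K_i|\le 4\pi+2\pi d_i$. In the Euclidean case local finiteness gives $d_i<\infty$ for each $i$, and in the hyperbolic case bounded degree makes this bound uniform in $i$.

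\emph{Step 1 (exhaustion).} Choose finite vertex sets $V_1\subset V_2\subset\cdots$ exhausting $\T_0$. On $V_n$, solve the truncated system in the conformal variables: $u_i'=-\widetilde K_i(u)$ for $i\in V_n$, with $u_j\equiv u_j^0$ frozen for $j\notin V_n$. In the hyperbolic case we use $w_i'=-\widetilde K_i$ with $w_i<0$. Each $\widetilde\alpha$ is continuous in the radii, and we may take it locally Lipschitz in $u$ as provided by the continuous-extension results cited earlier (Ge--Jiang--Shen, Ge--Hua). Peano's theorem then gives local solutions; if only continuity is available, that suffices, since uniqueness is not needed. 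The bound $|u_i'|\le C_i$ prevents blow-up of $u_i$ in finite time, so $u$ stays finite and $r_i=e^{u_i}\in(0,\infty)$.

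In the hyperbolic case there is a further constraint: $w_i<0$, i.e. $r_i<\infty$, must be preserved. Here I would use that the extended solid angle sum at a very large vertex tends to $0$ (a hyperbolic tetrahedron with one huge ball has small solid angle at that vertex). This gives $\widetilde K_i>0$ once $r_i$ exceeds a threshold $R$ depending only on the degree bound. Hence $r_i\le\max(R,\sup r^0)$, and this is where $\sup_i r_i^0<\infty$ and bounded degree enter. I expect this to be the main obstacle. The needed fact is a uniform lower bound $\widetilde K_i\ge 4\pi-d\,\sup\widetilde\alpha(\text{large }r_i)>0$, which requires $\widetilde\alpha_{ijkl}\to0$ as $r_i\to\infty$ uniformly in the other three radii. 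I would prove this from the explicit extension: for $r_i$ large, $c_i\to1$, and the configuration's solid angle at $i$ degenerates. If the uniformity fails, I would instead show $\widetilde\alpha_{ijkl}<4\pi/d$ for $r_i\ge R$ by a compactness argument over the other radii in $[0,\infty]$.

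\emph{Step 2 (compactness).} For each fixed $i$, the sequence $u_i^{(n)}$ for $n$ large has $|u_i^{(n)}(t)-u_i^0|\le C_i t$ and is uniformly Lipschitz. By Arzel\`a--Ascoli on each $[0,T]$ and a diagonal argument over the countably many vertices and $T=1,2,\dots$, we extract a subsequence converging locally uniformly for every $i$. In the hyperbolic case the limits stay in the domain because of the uniform upper bound on $r_i$ from Step 1 and the lower bound $w_i\ge w_i^0-C_it$.

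\emph{Step 3 (passing to the limit).} For fixed $i$, once $V_n$ contains $i$ and its neighbours, write $u_i^{(n)}(t)=u_i^0-\int_0^t\widetilde K_i(u^{(n)}(s))\,ds$. Since $\widetilde K_i$ depends continuously on finitely many coordinates, dominated convergence gives the same integral identity for the limit. The integrand is continuous, so $u_i\in C^1([0,\infty))$, and therefore $r_i=e^{u_i}$ (respectively $2\operatorname{arctanh}(e^{w_i})$) is $C^1$ and solves the extended flow with the stated initial data.
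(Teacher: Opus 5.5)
Your proposal is correct and follows essentially the same route as the paper: a finite Dirichlet exhaustion solved by Peano's theorem, the curvature bound $|\widetilde K_i|\le 4\pi+2\pi d_i$ giving per-vertex Lipschitz control, a large-radius solid-angle estimate for the hyperbolic upper bound, and then Arzela--Ascoli with a diagonal argument, passing to the limit in the integral form (the paper uses the equivalent weak form).

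The only real difference is in the hyperbolic upper bound. The paper applies the large-radius estimate, cited from Ge--Hua with the hypothesis that $r_i$ is the largest radius in the tetrahedron, only at a maximal-radius vertex via a first-crossing argument. Your per-vertex barrier instead needs a version uniform in the other three radii. That version does hold: once $r_i$ is large, the tetrahedron cannot lie in $D_i^H$, so either $\widetilde\alpha_{i,\tau}=0$ or the tetrahedron is real. In the real case, each face angle at $i$ is less than $\pi/(\cosh r_i-1)$, because the sector of the ball at $i$ lies inside the face and the face has area less than $\pi$.
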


The paper is organized as follows.  Section 2 fixes the notation for combinatorial Yamabe flows, the graph maximum principle, and the extension of solid angles and flows.  Section 3 treats the Euclidean background geometry: non-degeneracy, curvature evolution, short-time existence, and uniqueness.  Section 4 proves the corresponding hyperbolic short-time existence and uniqueness results.  Section 5 proves the long-time existence of the extended flows.

\section{Preliminaries}
\subsection{Combinatorial Yamabe Flows and Maximum Principles}
Let $\T=(\T_0,\T_1,\T_2,\T_3)$ be a locally finite triangulation of a three-manifold without boundary, where $\T_0,\T_1,\T_2,\T_3$ denote the sets of vertices, edges, faces, and tetrahedra.  We write $i\sim j$ if $ij\in\T_1$.  Local finiteness means that every vertex has only finitely many adjacent vertices and is incident to only finitely many tetrahedra.

For $i\in\T_0$, let
\[
 q_i=\#\{j\in\T_0:j\sim i\},\qquad
 d_i=\#\{\{i,j,k,l\}\in\T_3\}.
\]
Here $q_i$ is the usual graph degree of the vertex $i$.  We say that $\T$ has degree bounded by $M$ if $q_i\le M$ for all $i\in\T_0$.  Since a tetrahedron incident to $i$ is determined by three adjacent vertices of $i$, this implies
\[
 d_i\le D_M:=\binom{M}{3}.
\]
A ball-packing metric is a map $r:\T_0\to(0,\infty)$, and each edge has length $l_{ij}=r_i+r_j$.

Whenever all tetrahedra incident to $i$ are real, let $\alpha_{ijkl}$ be the solid angle at $i$ in the tetrahedron $\{i,j,k,l\}$ and set
\[
 K_i=4\pi-\sum_{\{i,j,k,l\}\in\T_3}\alpha_{ijkl}.
\]
This is the Cooper--Rivin combinatorial scalar curvature \cite{CooperRivin1996}.  The Euclidean three-dimensional combinatorial Yamabe flow introduced by Glickenstein \cite{Glickenstein2005} and its hyperbolic analogue studied by Ge and Hua \cite{GeHua2020} are
\[
 \frac{dr_i}{dt}=-K_i r_i,\qquad
 \frac{dr_i}{dt}=-K_i\sinh r_i.
\]
For Euclidean metrics we use the conformal variable $u_i=\log r_i$, while for hyperbolic metrics we use $w_i=\log\tanh(r_i/2)$; in these variables the corresponding original flow is $u_i'=-K_i$ or $w_i'=-K_i$.

We use pointwise solutions throughout.  Thus a family $r(t):\T_0\to(0,\infty)$ is a $C^1$ solution on an interval $I$ if $r_i\in C^1(I)$ for every $i\in\T_0$ and the relevant differential equation holds at each vertex.

\begin{lemma}[Elementary curvature bound]\label{lem:curv-bound}
Assume that each solid angle lies in $[0,2\pi]$.  Then
\[
 |K_i|\le 4\pi+2\pi d_i.
\]
In particular, if $\T$ has degree bounded by $M$, then $|K_i|$ is bounded by a constant depending only on $M$.
\end{lemma}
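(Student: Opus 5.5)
The bound follows directly from the definition of $K_i$ and the triangle inequality. By definition,
\[
 K_i = 4\pi - \sum_{\{i,j,k,l\}\in\T_3}\alpha_{ijkl},
\]
where the sum runs over the $d_i$ tetrahedra incident to $i$. This is a finite sum by local finiteness.

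\emph{Step 1.} Apply the triangle inequality:
\[
 |K_i| \le 4\pi + \sum_{\{i,j,k,l\}\in\T_3} |\alpha_{ijkl}|.
\]

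\emph{Step 2.} Use the hypothesis $0\le\alpha_{ijkl}\le 2\pi$ to bound each summand by $2\pi$. This gives
\[
 |K_i| \le 4\pi + 2\pi d_i.
\]
The same argument applies verbatim to the extended angles $\widetilde\alpha$. In the Euclidean case these take values in $\{0,2\pi\}$ or in the range of genuine solid angles, which lie in $(0,2\pi)$. So the hypothesis holds there as well.

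\emph{Step 3.} For the second claim, recall from the preliminaries that if $q_i\le M$, then $d_i\le D_M=\binom{M}{3}$. This holds because each tetrahedron containing $i$ is determined by an unordered triple of neighbours of $i$. Substituting gives $|K_i|\le 4\pi+2\pi\binom{M}{3}$, which depends only on $M$.

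There is no real obstacle here. The only point to check is that the hypothesis ``each solid angle lies in $[0,2\pi]$'' is genuinely available. For real tetrahedra this is automatic: a solid angle at a vertex of a nondegenerate tetrahedron is the area of a spherical triangle contained in a hemisphere, hence it lies in $(0,2\pi)$. Since the lemma assumes this bound outright, no further work is needed.
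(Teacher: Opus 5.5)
Your proposal is correct and follows essentially the same argument as the paper: $K_i$ is $4\pi$ minus a finite sum of $d_i$ terms, each in $[0,2\pi]$, which gives $|K_i|\le 4\pi+2\pi d_i$. Combining this with $d_i\le D_M=\binom{M}{3}$ yields the uniform bound.
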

\begin{proof}
For a fixed vertex $i$, the curvature is $4\pi$ minus a finite sum over the tetrahedra incident to $i$.  The sum is between $0$ and $2\pi d_i$, which gives the estimate.
\end{proof}

For a locally finite weighted graph $G=(V,E)$ with time-dependent weights, set
\[
 (\Delta f)_i=\frac1{a_i(t)}\sum_{j:j\sim i}\omega_{ij}(t)(f_j-f_i),\qquad \omega_{ij}(t)\ge0,\quad a_i(t)>0.
\]
The weights may depend on time.  We use the bounded weighted-degree condition
\[
 \sup_{(i,t)\in V\times[0,T]}\frac1{a_i(t)}\sum_{j:j\sim i}\omega_{ij}(t)<\infty.
\]

The following maximum principle is a weighted form of the infinite-graph maximum principle used by Ge, Hua, and Zhou \cite{GeHuaZhou2025}.

\begin{lemma}[Maximum principle on infinite graphs]\label{lem:mp}
Assume
\[
 \frac1{a_i(t)}\sum_{j:j\sim i}\omega_{ij}(t)\le C_0,\qquad h_i(t)\le C_1
\]
on $V\times[0,T]$.  If $f$ is bounded and
\[
 f_i'\le (\Delta f)_i+h_i f_i,
\]
then $f\le0$ on $V\times[0,T]$.  Hence a bounded solution of $f'=
\Delta f+hf$ with $f(0)=0$ is identically zero.
\end{lemma}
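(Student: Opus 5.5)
The plan is to reduce to a strictly negative multiplier and then use a perturbation argument in the spirit of Ge--Hua--Zhou, which controls the supremum over an infinite graph without assuming it is attained. First, rescale time. Put $g_i(t)=e^{-\lambda t}f_i(t)$ with $\lambda=C_1+1$. Then $g$ is bounded and satisfies
\[
 g_i'\le (\Delta g)_i+(h_i-\lambda)g_i,
\]
where $h_i-\lambda\le -1$. It suffices to show $g\le 0$. We also use $f(0)\le0$, which is the implicit hypothesis: the paper applies the lemma with $f(0)=0$.

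Suppose instead that $S:=\sup_{V\times[0,T]}g>0$. Since $V$ may be infinite, the supremum need not be attained, so we do not look for a maximum point. For small $\delta>0$, choose $(i_0,t_0)$ with $g_{i_0}(t_0)>S-\delta$. Since $g(0)\le0<S-\delta$, continuity of $g_{i_0}$ gives a first time $t_1\in(0,t_0]$ at which $g_{i_0}(t_1)=S-\delta$, with $g_{i_0}<S-\delta$ on $[0,t_1)$. Hence
\[
 \liminf_{\tau\to0^+}\frac{g_{i_0}(t_1)-g_{i_0}(t_1-\tau)}{\tau}\ge 0,
\]
so $g_{i_0}'(t_1)\ge0$, using that $g_{i_0}$ is $C^1$.

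Now evaluate the differential inequality at $(i_0,t_1)$. Because $g_j\le S$ for every $j$, the bounded weighted-degree condition gives
\[
 (\Delta g)_{i_0}(t_1)=\frac1{a_{i_0}}\sum_{j\sim i_0}\omega_{i_0j}\bigl(g_j-g_{i_0}\bigr)\le C_0\,\delta .
\]
Since $g_{i_0}(t_1)=S-\delta>0$, the multiplier term satisfies $(h_{i_0}-\lambda)g_{i_0}\le -(S-\delta)$. Combining these,
\[
 0\le g_{i_0}'(t_1)\le C_0\delta-(S-\delta).
\]
Taking $\delta<S/(C_0+1)$ makes the right-hand side negative, a contradiction. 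So $g\le0$, and therefore $f\le0$.

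The main obstacle is that the supremum over an infinite vertex set may not be attained. The argument handles this by working only at near-maximal points, and this is exactly why both boundedness of $f$ and the uniform bound $C_0$ on the weighted degree are needed. For the final claim, apply the inequality to $f$ and to $-f$; both are bounded and satisfy the equation with zero initial data, so $f\equiv0$.
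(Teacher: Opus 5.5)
Your argument is correct, but it takes a different route from the paper.

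The paper forces a maximum to be attained. After the same exponential rescaling, it penalizes by $F^\delta_i=F_i-\delta d(i)-\delta(C_0+1)t$, where $d$ is the graph distance from a base vertex in a fixed connected component. Boundedness of $F$ and finiteness of balls (local finiteness) then give a genuine maximum point. There, $\Delta d\le C_0$ and the $-\delta(C_0+1)t$ term give the strict inequality $\partial_tF^\delta<\Delta F^\delta\le0$.

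You never need attainment. You work at the near-maximal level $S-\delta$ and get the sign of the time derivative from a first hitting time at a single vertex. You bound the Laplacian error by $C_0\delta$ using only $g\le S$ and the weighted-degree bound. You absorb that error with the strict damping $h-\lambda\le-1$ supplied by the rescaling; this damping is your source of strictness, where the paper uses a time penalty.

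Your version is shorter and needs neither local finiteness, nor the reduction to a component, nor a distance function. The paper's penalization is the more flexible template, however. It only needs $F-\delta d\to-\infty$ off finite sets, so it carries over essentially unchanged to functions bounded above by $o(d(i))$. Your argument, by contrast, relies on $\sup f<\infty$ in an essential way.

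You are also right that $f(0)\le0$ is an implicit hypothesis of the statement; the paper's proof uses it in the same way, to rule out a maximum at $t=0$.
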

\begin{proof}
It suffices to work on one connected component of $G$.  Choose $C>C_1$ and put $F_i=e^{-Ct}f_i$.  Then
\[
 F_i'\le(\Delta F)_i+(h_i-C)F_i.
\]
Fix a vertex $o$ in this component and set $d(i)=\dist(o,i)$.  Since $|d(j)-d(i)|\le1$ for adjacent vertices,
\[
 (\Delta d)_i\le \frac1{a_i}\sum_{j:j\sim i}\omega_{ij}\le C_0.
\]
For $\delta>0$ define
\[
 F_i^\delta=F_i-\delta d(i)-\delta(C_0+1)t.
\]
Because $F$ is bounded and the balls centered at $o$ are finite, the term $-\delta d(i)$ forces $F^\delta$ to tend to $-\infty$ along sequences escaping finite sets.  Hence $F^\delta$ attains its maximum on $V\times[0,T]$.  If this maximum were positive and attained at some $(i_0,t_0)$, then $t_0>0$ because $f(0)\le0$.  At this point, $F_{i_0}(t_0)>0$, $\partial_tF^\delta_{i_0}(t_0)\ge0$, and $(\Delta F^\delta)_{i_0}(t_0)\le0$.  On the other hand,
\[
 \partial_tF^\delta
 \le \Delta F^\delta+\delta\Delta d+(h-C)F-\delta(C_0+1)
 < \Delta F^\delta
\]
at $(i_0,t_0)$, a contradiction.  Thus $F^\delta\le0$ for every $\delta>0$.  Letting $\delta\downarrow0$ gives $f\le0$.  Applying the result to $f$ and $-f$ gives uniqueness.
\end{proof}

\subsection{Extension of the Flow}
The original flows are defined only as long as all tetrahedra remain real.  To continue a solution through degenerations, one extends the solid angles from the real configuration space to all positive radii, following the extension theory for Euclidean and hyperbolic ball packings \cite{GeJiangShen2022,GeHua2020}.

Let $\tau=\{a,b,c,d\}$ be a tetrahedron and let $a,b,c,d$ denote its four vertices.  In the Euclidean case set $x_a=1/r_a$.  If $Q^E_\tau>0$, define $\wt\alpha^E_{a,\tau}$ to be the ordinary solid angle $\alpha_{a,\tau}$.  If $Q^E_\tau\le0$, then $\tau$ belongs to a unique virtual component $D_a^E$, characterized by
\[
 x_a\ge x_b+x_c+x_d+2\sqrt{x_bx_c+x_bx_d+x_cx_d},
\]
or equivalently by the fact that the ball at $a$ passes through the gap between the other three mutually tangent balls.  In this case one sets
\[
 \wt\alpha^E_{a,\tau}=2\pi,\qquad
 \wt\alpha^E_{b,\tau}=\wt\alpha^E_{c,\tau}=\wt\alpha^E_{d,\tau}=0.
\]
This is the continuous extension of Euclidean solid angles constructed in \cite{GeJiangShen2022}.

In the hyperbolic case set $c_a=\coth r_a$.  If $Q^H_\tau>0$, define $\wt\alpha^H_{a,\tau}=\alpha_{a,\tau}$.  If $Q^H_\tau\le0$, then $\tau$ belongs to a unique virtual component $D_a^H$, characterized by
\[
 c_a\ge c_b+c_c+c_d+2\sqrt{c_bc_c+c_bc_d+c_cc_d+1}.
\]
For $r_\tau\in D_a^H$ one defines
\[
 \wt\alpha^H_{a,\tau}=2\pi,\qquad
 \wt\alpha^H_{b,\tau}=\wt\alpha^H_{c,\tau}=\wt\alpha^H_{d,\tau}=0.
\]
This is the continuous extension of hyperbolic solid angles constructed in \cite{GeHua2020}.

The extended Cooper--Rivin curvature is therefore defined for every positive ball-packing metric by
\[
 \wt K_i^E=4\pi-\sum_{\tau\in\T_3:\,i\in\tau}\wt\alpha^E_{i,\tau},\qquad
 \wt K_i^H=4\pi-\sum_{\tau\in\T_3:\,i\in\tau}\wt\alpha^H_{i,\tau}.
\]
When the background geometry is fixed, we omit the superscript and write simply $\wt K_i$.  The extended Euclidean and hyperbolic combinatorial Yamabe flows are respectively
\[
 \frac{dr_i}{dt}=-\wt K_i r_i,\qquad
 \frac{dr_i}{dt}=-\wt K_i\sinh r_i.
\]

\section{Euclidean Background Geometry}
\subsection{Euclidean Ball-Packings and Curvature Evolution}
In the Euclidean background geometry, a ball-packing metric assigns the edge length $l_{ab}=r_a+r_b$ to each edge $ab$.  For a tetrahedron $\{i,j,k,l\}$, set $x_a=1/r_a$.  The six edge lengths determine a non-degenerate Euclidean tetrahedron precisely when
\begin{equation}\label{eq:QE}
 Q^E_{ijkl}=(x_i+x_j+x_k+x_l)^2-2(x_i^2+x_j^2+x_k^2+x_l^2)>0,
\end{equation}
as recalled from the finite-dimensional ball-packing theory \cite{CooperRivin1996,Glickenstein2005}.

\begin{proposition}[Strong non-degeneracy implies reality]\label{prop:E-strong-real}
Let $\tau=\{i,j,k,l\}$ be a Euclidean ball-packing tetrahedron and set $S_\tau=x_i+x_j+x_k+x_l$.  If
\[
 x_a\le S_\tau-x_a,\qquad a\in\{i,j,k,l\},
\]
then $Q^E_{ijkl}>0$, and hence $\tau$ is real.  More precisely, if there exists $\varepsilon>0$ such that
\[
 S_\tau-2x_a\ge\varepsilon,\qquad a\in\{i,j,k,l\},
\]
then $Q^E_{ijkl}\ge \varepsilon S_\tau>0$.
\end{proposition}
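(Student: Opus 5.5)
The plan is to rewrite $Q^E_{ijkl}$ as a nonnegative combination of the quantities $S_\tau-2x_a$, after which both claims are immediate.

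\textbf{The identity.} For $a\in\{i,j,k,l\}$ put $y_a=S_\tau-2x_a$, so the hypothesis $x_a\le S_\tau-x_a$ says exactly $y_a\ge0$. Two routine computations follow. First,
\[
\sum_a y_a=4S_\tau-2S_\tau=2S_\tau .
\]
Second,
\[
\sum_a x_a y_a=S_\tau\sum_a x_a-2\sum_a x_a^2=S_\tau^2-2\sum_a x_a^2=Q^E_{ijkl}.
\]
Hence
\[
Q^E_{ijkl}=\sum_{a\in\{i,j,k,l\}} x_a\,(S_\tau-2x_a).
\]

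\textbf{Reality under the first hypothesis.} Every $x_a=1/r_a$ is strictly positive and every $y_a\ge0$, so each term $x_ay_a$ is nonnegative and $Q^E_{ijkl}\ge0$. To get strict positivity, note that $\sum_a y_a=2S_\tau>0$. So at least one $y_a$ is strictly positive, and since the corresponding $x_a$ is also positive, that term is strictly positive. Therefore $Q^E_{ijkl}>0$, and $\tau$ is real by the criterion \eqref{eq:QE}.

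\textbf{The quantitative bound.} If $y_a\ge\varepsilon$ for every $a$, then
\[
Q^E_{ijkl}=\sum_a x_ay_a\ge\varepsilon\sum_a x_a=\varepsilon S_\tau>0 .
\]

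\textbf{Main obstacle.} The only nontrivial step is spotting the identity $Q^E=\sum_a x_a(S_\tau-2x_a)$. Once it is in hand, the rest is sign bookkeeping, with the strict inequality coming from $\sum_a y_a=2S_\tau>0$.
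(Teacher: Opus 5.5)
Your proof is correct. The quantitative part matches the paper's argument: the paper multiplies $2x_a\le S_\tau-\varepsilon$ by $x_a$ and sums to get $2\sum_a x_a^2\le S_\tau^2-\varepsilon S_\tau$, which is exactly your inequality $\sum_a x_a(S_\tau-2x_a)\ge\varepsilon S_\tau$.

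For strict positivity under the first hypothesis, the paper takes a different route. It normalizes $p_a=x_a/S_\tau$ and uses $\sum_a p_a^2\le\max_a p_a\le\frac12$. It then rules out equality, since equality in the first step forces every $p_a=\frac14$, and concludes from $Q^E_{ijkl}=S_\tau^2\bigl(1-2\sum_a p_a^2\bigr)$.

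Your identity $Q^E_{ijkl}=\sum_a x_a(S_\tau-2x_a)$ handles both claims at once. Strictness comes for free from $\sum_a(S_\tau-2x_a)=2S_\tau>0$, so no equality-case analysis is needed. This is also the form the paper itself uses later, for the bound $Q^E_{ijkl}\ge 2\varepsilon^2$ in Proposition~\ref{prop:E-mu-positive} and for $Q^H$ in Proposition~\ref{prop:H-strong-real}.

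What the paper's normalization buys is the scale-invariant shape quantity $1-2\sum_a p_a^2=Q^E_\tau/S_\tau^2$. This reappears as $q_\tau$ in the short-time existence argument.
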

\begin{proof}
Set $p_a=x_a/S_\tau$.  Then $p_a>0$, $\sum_{a\in\tau}p_a=1$, and the assumptions give $p_a\le1/2$ for every $a$.  Hence
\[
 \sum_{a\in\tau}p_a^2<\frac12 .
\]
Indeed, if $p_m=\max_a p_a$, then $\sum_a p_a^2\le p_m\le1/2$; equality cannot occur because all four $p_a$ are positive and sum to one.  Therefore
\[
 Q^E_{ijkl}=S_\tau^2\left(1-2\sum_{a\in\tau}p_a^2\right)>0 .
\]
Under the stronger uniform assumption $S_\tau-2x_a\ge\varepsilon$, we have $2x_a\le S_\tau-\varepsilon$ for every $a$, and therefore
\[
 2\sum_{a\in\tau}x_a^2\le (S_\tau-\varepsilon)\sum_{a\in\tau}x_a
 =S_\tau^2-\varepsilon S_\tau.
\]
This gives $Q^E_{ijkl}\ge\varepsilon S_\tau>0$.
\end{proof}

Thus the Euclidean strong non-degenerate condition I in Definition \ref{def:intro-strong} is a sufficient non-degeneracy condition: every incident tetrahedron is real.  Uniform lower bounds for the margins $x_j+x_k+x_l-x_i$ give the quantitative estimates used later.

The Cooper--Rivin curvature is
\begin{equation}\label{eq:KE}
 K_i=4\pi-\sum_{\{i,j,k,l\}\in\T_3}\alpha_{ijkl}.
\end{equation}
The Euclidean Yamabe flow is
\begin{equation}\label{eq:Eflow}
 r_i'=-K_i r_i,
\end{equation}
or, in the variable $u_i=\log r_i$, $u_i'=-K_i$.

We next derive the Euclidean curvature evolution formula.  Set
\begin{equation}\label{eq:muE}
 \mu_{ij}=\sum_{\{i,j,k,l\}\in\T_3}\frac{\partial\alpha_{ijkl}}{\partial r_j}r_ir_j .
\end{equation}

The following identity is a standard consequence of the Euclidean Schlaefli formula in the finite-dimensional ball-packing theory \cite{Glickenstein2005}.  For completeness, we include the proof.

\begin{lemma}[Schlaefli identity for solid angles]\label{lem:E-schlafli-solid}
Let $\tau=\{a,b,c,d\}$ be a real Euclidean ball-packing tetrahedron, and let $\alpha_{a,\tau}$ be its solid angle at the vertex $a\in\tau$.  Then
\begin{equation}\label{eq:EangleEuler}
 \sum_{b\in\tau}r_b\frac{\partial\alpha_{a,\tau}}{\partial r_b}=0,\qquad a\in\tau .
\end{equation}
\end{lemma}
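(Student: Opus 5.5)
The identity is Euler's relation for a function that is homogeneous of degree zero, and I will prove it that way. The point is that the Euclidean solid angle $\alpha_{a,\tau}$, viewed as a function of the radii $(r_a,r_b,r_c,r_d)$, is invariant under uniform scaling. For $\lambda>0$, the radii $\lambda r$ give edge lengths $\lambda(r_a+r_b)$ and so on. These are the edge lengths of the tetrahedron obtained from the original by the Euclidean similarity $x\mapsto\lambda x$, and a similarity preserves all angles, dihedral angles and solid angles.

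The first step is to check that the real configuration space is a cone, so that scaling does not leave the domain. The set
\[
 \Omega_\tau=\{r\in(0,\infty)^4:\ Q^E_\tau(r)>0\}
\]
is invariant under $r\mapsto\lambda r$, because $x_a=1/r_a$ scales by $\lambda^{-1}$ and $Q^E_\tau$ is a homogeneous quadratic in the $x$'s, giving $Q^E_\tau(\lambda r)=\lambda^{-2}Q^E_\tau(r)$. Hence $\alpha_{a,\tau}(\lambda r)=\alpha_{a,\tau}(r)$ for all $\lambda>0$ and $r\in\Omega_\tau$.

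The second step is to differentiate this identity in $\lambda$ at $\lambda=1$. The chain rule gives
\[
 0=\frac{d}{d\lambda}\Big|_{\lambda=1}\alpha_{a,\tau}(\lambda r)=\sum_{b\in\tau}r_b\,\frac{\partial\alpha_{a,\tau}}{\partial r_b}(r),
\]
which is \eqref{eq:EangleEuler}. Here $b$ ranges over all four vertices, including $b=a$, consistent with the statement.

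The only genuine point to justify, and the main potential obstacle, is that $\alpha_{a,\tau}$ is differentiable on $\Omega_\tau$. I would handle it as follows.
\begin{itemize}
\item On the open set of non-degenerate tetrahedra, the dihedral angles are smooth functions of the six edge lengths; for instance, use the cosine formula for dihedral angles via Cayley--Menger or Gram determinants, whose denominators do not vanish in the non-degenerate case.
\item The solid angle is smooth in the edge lengths, since $\alpha_{a,\tau}=\beta_{ab}+\beta_{ac}+\beta_{ad}-\pi$ is a sum of dihedral angles at the three edges through $a$ (Girard-type formula).
\item The edge lengths $r_a+r_b$ are linear in $r$.
\end{itemize}
Composing these shows that $\alpha_{a,\tau}$ is smooth on $\Omega_\tau$, which is all the argument needs.

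The name ``Schläfli'' suggests an alternative route via $\sum_e l_e\,d\beta_e=0$. That route would give a different identity (the symmetry of $\partial\alpha/\partial r$ after weighting), so it is not needed here. The scaling argument alone yields the stated identity.
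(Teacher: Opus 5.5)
Your proof is correct, but it takes a different route from the paper.

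\textbf{The paper's route.} The paper does not use scaling. It writes $\alpha_{a,\tau}=\sum_{b\ne a}\beta_{ab}-\pi$ and $l_{ab}=r_a+r_b$, so the Euclidean Schlaefli formula $\sum l_{ab}\,d\beta_{ab}=0$ becomes $\sum_a r_a\,d\alpha_{a,\tau}=0$. Then $F_\tau=\sum_a r_a\alpha_{a,\tau}$ satisfies $dF_\tau=\sum_a\alpha_{a,\tau}\,dr_a$. This gives the symmetry $\partial\alpha_{a,\tau}/\partial r_b=\partial\alpha_{b,\tau}/\partial r_a$. The symmetry converts the ``column'' identity $\sum_b r_b\,\partial\alpha_{b,\tau}/\partial r_a=0$, read off from $\sum_b r_b\,d\alpha_{b,\tau}=0$, into the stated ``row'' identity. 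So your closing remark undersells that route: Schlaefli, together with the symmetry it produces, does yield the lemma, just less directly.

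\textbf{Comparison.} Your Euler-homogeneity argument is shorter and needs less input. It uses only three facts: similarity invariance of solid angles, the cone property of $\{Q^E_\tau>0\}$, and first-order differentiability of $\alpha_{a,\tau}$. It needs no Schlaefli formula and no symmetry of second derivatives of a potential. The paper's route buys extra structure in exchange. It gives the symmetric Jacobian and the potential $F_\tau$ with gradient $(\alpha_{a,\tau})_a$, which is the variational structure behind the flow. Its mechanism also persists in hyperbolic background geometry, where there are no similarities and your argument has no direct analogue. There, as the paper notes, the volume term in the hyperbolic Schlaefli formula produces the extra zeroth-order term $h_i$ in the curvature evolution.
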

\begin{proof}
Let $\beta_{ab}$ be the dihedral angle at the edge $ab$ of $\tau$.  Since
\[
 \alpha_{a,\tau}=\sum_{b\in\tau\setminus\{a\}}\beta_{ab}-\pi
\]
and $l_{ab}=r_a+r_b$, the Euclidean Schlaefli formula, as used in the ball-packing setting by Glickenstein \cite{Glickenstein2005}, gives
\[
\begin{aligned}
 \sum_{a\in\tau}r_a\,d\alpha_{a,\tau}
 &=\sum_{a\in\tau}r_a\sum_{b\in\tau\setminus\{a\}}d\beta_{ab}  \\
 &=\sum_{\{a,b\}\subset\tau}(r_a+r_b)d\beta_{ab}  \\
 &=\sum_{\{a,b\}\subset\tau}l_{ab}\,d\beta_{ab}=0 .
\end{aligned}
\]
Thus the function $F_\tau=\sum_{a\in\tau}r_a\alpha_{a,\tau}$ satisfies
\[
 dF_\tau=\sum_{a\in\tau}\alpha_{a,\tau}\,dr_a+
 \sum_{a\in\tau}r_a\,d\alpha_{a,\tau}
 =\sum_{a\in\tau}\alpha_{a,\tau}\,dr_a .
\]
It follows that
\[
 \frac{\partial\alpha_{a,\tau}}{\partial r_b}
 =\frac{\partial^2F_\tau}{\partial r_b\partial r_a}
 =\frac{\partial^2F_\tau}{\partial r_a\partial r_b}
 =\frac{\partial\alpha_{b,\tau}}{\partial r_a}.
\]
Taking the coefficient of $dr_a$ in $\sum_{b\in\tau}r_b\,d\alpha_{b,\tau}=0$ gives
\[
 \sum_{b\in\tau}r_b\frac{\partial\alpha_{b,\tau}}{\partial r_a}=0.
\]
Using the symmetry of the first derivatives just proved, this becomes \eqref{eq:EangleEuler}.
\end{proof}

\begin{proposition}[Curvature evolution]\label{prop:Eevol}
Let $r(t):\T_0\to(0,\infty)$ be a smooth one-parameter family of Euclidean ball-packing metrics such that every tetrahedron is real for all times under consideration, and set $u_i(t)=\log r_i(t)$.  Then
\begin{equation}\label{eq:Evar}
 K_i'=-\frac1{r_i}\sum_{j:j\sim i}\mu_{ij}(u_j'-u_i').
\end{equation}
In particular, along \eqref{eq:Eflow},
\begin{equation}\label{eq:Eheat}
 K_i'=\frac1{r_i}\sum_{j:j\sim i}\mu_{ij}(K_j-K_i).
\end{equation}
\end{proposition}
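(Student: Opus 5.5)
The plan is a direct chain-rule computation at a fixed vertex $i$, followed by the Euler-type identity of Lemma \ref{lem:E-schlafli-solid} to turn the result into a difference form. Since $\T$ is locally finite, $K_i$ is a finite sum of solid angles. Each solid angle $\alpha_{ijkl}$ is a smooth function of the four radii of its tetrahedron on the real region. So $K_i(t)$ is differentiable, and differentiation passes through the finite sum with no convergence issues.

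\textbf{Step 1 (chain rule).} For each tetrahedron $\tau\ni i$ I write $r_b'=r_b u_b'$. This gives
\[
 K_i'=-\sum_{\tau\ni i}\sum_{b\in\tau}\frac{\partial\alpha_{i,\tau}}{\partial r_b}\,r_b\,u_b' .
\]

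\textbf{Step 2 (subtract the Euler identity).} By \eqref{eq:EangleEuler}, $\sum_{b\in\tau}r_b\,\partial\alpha_{i,\tau}/\partial r_b=0$. I subtract $u_i'$ times this zero quantity inside each $\tau$-sum. This yields
\[
 K_i'=-\sum_{\tau\ni i}\sum_{b\in\tau\setminus\{i\}}r_b\frac{\partial\alpha_{i,\tau}}{\partial r_b}\,(u_b'-u_i').
\]
The $b=i$ term now vanishes, which is exactly what turns the expression into a Laplacian-type form.

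\textbf{Step 3 (regroup by edges).} Every $b\in\tau\setminus\{i\}$ is a neighbor $j\sim i$. Conversely, for fixed $j\sim i$, the relevant tetrahedra are exactly those of the form $\{i,j,k,l\}\in\T_3$. Exchanging the order of the two finite sums gives
\[
 K_i'=-\sum_{j\sim i}\Bigl(\sum_{\{i,j,k,l\}\in\T_3}r_j\frac{\partial\alpha_{ijkl}}{\partial r_j}\Bigr)(u_j'-u_i').
\]
By the definition \eqref{eq:muE}, the bracket equals $\mu_{ij}/r_i$, which is \eqref{eq:Evar}. If $i\sim j$ but no tetrahedron contains the edge $ij$, then $\mu_{ij}=0$ and that term contributes nothing, so the formula remains consistent.

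\textbf{Step 4 (along the flow).} Along \eqref{eq:Eflow} we have $u_i'=-K_i$. Substituting into \eqref{eq:Evar} gives \eqref{eq:Eheat} immediately.

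There is no real obstacle in this argument. The only points to check are that the indexing in Step 3 matches the convention in \eqref{eq:muE}, namely the derivative of the angle at $i$ with respect to $r_j$. That is the form the Euler identity produces, and by the symmetry shown in the proof of Lemma \ref{lem:E-schlafli-solid} it also shows that $\mu_{ij}=\mu_{ji}$.
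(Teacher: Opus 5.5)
Your proposal is correct and follows essentially the same route as the paper: you apply the chain rule with $r_b'=r_bu_b'$, subtract $u_i'$ times the vanishing sum from Lemma \ref{lem:E-schlafli-solid} inside each tetrahedron, regroup by the edges $ij$ to identify the coefficient $\mu_{ij}/r_i$, and substitute $u'=-K$. Your side remarks (sums are finite by local finiteness, and $\mu_{ij}=\mu_{ji}$ by the derivative symmetry) are also correct, though the statement does not need them.
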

\begin{proof}
Fix a tetrahedron $\tau=\{i,j,k,l\}$ incident to $i$.  Since $u_a(t)=\log r_a(t)$, we have $r_a'=r_a u_a'$, and the chain rule gives
\[
\begin{aligned}
 \frac{d}{dt}\alpha_{ijkl}
 &=\sum_{a\in\{i,j,k,l\}}\frac{\partial\alpha_{ijkl}}{\partial r_a}r_a u_a'  \\
 &=u_i'\sum_{a\in\{i,j,k,l\}}\frac{\partial\alpha_{ijkl}}{\partial r_a}r_a
   +\sum_{a\in\{j,k,l\}}\frac{\partial\alpha_{ijkl}}{\partial r_a}r_a(u_a'-u_i').
\end{aligned}
\]
The first term on the right-hand side vanishes by Lemma \ref{lem:E-schlafli-solid}.  Therefore
\[
\begin{aligned}
 \frac{d}{dt}\alpha_{ijkl}
 &=\sum_{a\in\{j,k,l\}}\frac{\partial\alpha_{ijkl}}{\partial r_a}r_a(u_a'-u_i')  \\
 &=\frac1{r_i}\sum_{a\in\{j,k,l\}}
   \frac{\partial\alpha_{ijkl}}{\partial r_a}r_ir_a(u_a'-u_i') .
\end{aligned}
\]
Differentiating \eqref{eq:KE} now yields
\[
\begin{aligned}
 K_i'
 &=-\sum_{\{i,j,k,l\}\in\T_3}\frac{d}{dt}\alpha_{ijkl} \\
 &=-\frac1{r_i}\sum_{\{i,j,k,l\}\in\T_3}
   \left[
   \frac{\partial\alpha_{ijkl}}{\partial r_j}r_ir_j(u_j'-u_i')
   +\frac{\partial\alpha_{ijkl}}{\partial r_k}r_ir_k(u_k'-u_i')
   +\frac{\partial\alpha_{ijkl}}{\partial r_l}r_ir_l(u_l'-u_i')
   \right].
\end{aligned}
\]
We then group all summands according to the vertex paired with $i$.  The coefficient of $(u_j'-u_i')$ is exactly
\[
 \sum_{\{i,j,k,l\}\in\T_3}\frac{\partial\alpha_{ijkl}}{\partial r_j}r_ir_j=\mu_{ij},
\]
where the sum is over tetrahedra containing the edge $ij$.  Hence
\[
 K_i'=-\frac1{r_i}\sum_{j:j\sim i}\mu_{ij}(u_j'-u_i'),
\]
which proves \eqref{eq:Evar}.  Along the Euclidean Yamabe flow, $u_a'=-K_a$ for every vertex $a$, so
\[
 u_j'-u_i'=(-K_j)-(-K_i)=K_i-K_j.
\]
Substituting this into \eqref{eq:Evar} gives
\[
 K_i'=-\frac1{r_i}\sum_{j:j\sim i}\mu_{ij}(K_i-K_j)
 =\frac1{r_i}\sum_{j:j\sim i}\mu_{ij}(K_j-K_i),
\]
which is \eqref{eq:Eheat}, the Euclidean curvature evolution equation for the combinatorial Yamabe flow.
\end{proof}

We now recall the geometric meaning of the coefficient $\mu_{ij}$ in terms of the dual area associated with the edge $ij$.  The circumscripted sphere of a Euclidean tetrahedron means the sphere tangent to all six edges of the tetrahedron.  For a labeled Euclidean tetrahedron, the existence of such a sphere is equivalent to the circumscriptible edge condition: the edge lengths can be written as $l_{ab}=r_a+r_b$ for positive vertex parameters.  Equivalently,
\[
 l_{ij}+l_{kl}=l_{ik}+l_{jl}=l_{il}+l_{jk},
\]
with the resulting vertex parameters positive.  Thus this condition is built into the Euclidean ball-packing metrics considered here.  The remaining real, non-degenerate condition is $Q^E_{ijkl}>0$; in that case the circumscripted sphere has radius $\rho_{ijkl}$ satisfying $Q^E_{ijkl}=4/\rho_{ijkl}^2$; see \cite{Glickenstein2005}.

Let $C_{ijkl}$ be the center of the circumscripted sphere of the real tetrahedron $\{i,j,k,l\}$.  Let $r_{ijk}$ be the inradius of the triangular face $\{i,j,k\}$.  We write $h_{ijk,l}$ for the signed distance from $C_{ijkl}$ to the plane of $\{i,j,k\}$, positive when $C_{ijkl}$ and the opposite vertex $l$ lie on the same side of that plane and negative otherwise.  The signed dual-area contribution associated with the edge $ij$ in this tetrahedron is
\[
 A_{ij}^{kl}=\frac12h_{ijk,l}r_{ijk}+\frac12h_{ijl,k}r_{ijl}.
\]
Geometrically, this is the signed area of the piece of the dual face to the edge $ij$ inside the tetrahedron, decomposed into the two triangular contributions adjacent to the faces $\{i,j,k\}$ and $\{i,j,l\}$.

Glickenstein's derivative formulas identify this signed dual area with the coefficient appearing in the curvature variation \cite{Glickenstein2005}:
\[
 \frac{\partial\alpha_{ijkl}}{\partial r_j}r_ir_j=\frac{A_{ij}^{kl}}{l_{ij}},
\]
and hence
\[
 \mu_{ij}=\frac1{l_{ij}}\sum_{\{i,j,k,l\}\in\T_3}A_{ij}^{kl}.
\]

The following proposition records the sign of the Euclidean weights under condition I and the uniform upper bound that follows from a positive margin.

\begin{proposition}\label{prop:E-mu-positive}
Let $ij\in\T_1$ and assume that, for every tetrahedron $\{i,j,k,l\}$ containing $ij$ and every ordered vertex $a\in\{i,j,k,l\}$,
\[
 x_a\le x_b+x_c+x_d,\qquad \{a,b,c,d\}=\{i,j,k,l\}.
\]
Then every such tetrahedron is real and $\mu_{ij}>0$.  Moreover, if $\T$ has degree bounded by $M$ and there exists $\varepsilon>0$ such that
\[
 x_b+x_c+x_d-x_a\ge\varepsilon,\qquad \{a,b,c,d\}=\{i,j,k,l\},
\]
for every ordered tetrahedron, then
\[
 0<\mu_{ij}\le C(M,\varepsilon)
\]
for every edge $ij\in\T_1$.
\end{proposition}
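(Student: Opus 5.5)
Reality of every tetrahedron containing $ij$ follows at once from Proposition \ref{prop:E-strong-real}, since $x_a\le x_b+x_c+x_d$ is exactly $S_\tau-2x_a\ge0$. For positivity of $\mu_{ij}$ I will use the dual-area identity
\[
 \frac{\partial\alpha_{ijkl}}{\partial r_j}r_ir_j=\frac{A_{ij}^{kl}}{l_{ij}},\qquad
 A_{ij}^{kl}=\tfrac12h_{ijk,l}r_{ijk}+\tfrac12h_{ijl,k}r_{ijl},
\]
and show $h_{ijk,l}>0$ for every face. That is, I will show the center $C_{ijkl}$ of the edge-tangent sphere lies strictly inside the tetrahedron. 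The strategy is to compute $h_{ijk,l}$ explicitly in terms of the $x_a$. The edge-tangent sphere meets the plane of $\{i,j,k\}$ in the incircle of that face, of radius $r_{ijk}$. Hence $h_{ijk,l}^2=\rho_{ijkl}^2-r_{ijk}^2$, where $\rho_{ijkl}^2=4/Q^E_{ijkl}$.

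For the incircle I will use Heron's formula with semiperimeter $r_i+r_j+r_k$, which gives
\[
 r_{ijk}^2=\frac{r_ir_jr_k}{r_i+r_j+r_k}=\frac{1}{x_ix_j+x_ix_k+x_jx_k}.
\]
A short calculation should then give
\[
 h_{ijk,l}^2=\frac{(x_l-x_i-x_j-x_k)^2}{Q^E_{ijkl}\,(x_ix_j+x_ix_k+x_jx_k)}.
\]
I still need to check this identity, and I expect it to hold because $Q^E$ factors nicely. The sign claim is that $h_{ijk,l}$ is a positive multiple of $x_i+x_j+x_k-x_l$. In words, $C_{ijkl}$ lies on the side of $l$ exactly when $x_l<x_i+x_j+x_k$.

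The sign is the main obstacle. To settle it, I would first use continuity: $h$ vanishes only where $x_l=x_i+x_j+x_k$, so its sign is constant on each region of the real configuration space cut out by this hyperplane. The regular tetrahedron ($x$ all equal, center interior) fixes the sign on our side. Under condition I the equality case $x_l=x_i+x_j+x_k$ can occur, and then $h_{ijk,l}=0$. However, $h_{ijl,k}$ would need $x_k=x_i+x_j+x_l$ as well, and these two equalities are incompatible with positive $x$. So at most one of the two terms in $A_{ij}^{kl}$ vanishes, and each $A_{ij}^{kl}>0$. Summing over the finitely many tetrahedra containing $ij$, of which there is at least one, gives $\mu_{ij}>0$.

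For the upper bound, each $A_{ij}^{kl}$ with face terms $\frac12 h r$ and each $\frac1{l_{ij}}$ scale like length, so $A/l$ is scale invariant. I will bound it directly using
\[
 h_{ijk,l}\,r_{ijk}\le \frac{|x_l-x_i-x_j-x_k|}{\sqrt{Q^E}\,(x_ix_j+x_ix_k+x_jx_k)},\qquad \frac1{l_{ij}}=\frac{x_ix_j}{x_i+x_j}.
\]
Proposition \ref{prop:E-strong-real} gives $Q^E\ge\varepsilon S_\tau$, and the margin bound $x_i+x_j+x_k-x_l\ge\varepsilon$ together with $x_l\ge\varepsilon$-type lower bounds will control the rest. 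To do this I will normalize by $S_\tau$, writing $p_a=x_a/S_\tau$. Then $x_ix_j/((x_i+x_j)(x_ix_j+x_ix_k+x_jx_k))\le 1/(x_i+x_j)$, and $|x_l-x_i-x_j-x_k|\le S_\tau$. This gives a term bounded by $S_\tau/(\sqrt{\varepsilon S_\tau}(x_i+x_j))$. Since $x_i+x_j\ge\varepsilon$ (from $x_i+x_j+x_k-x_l\ge\varepsilon$ combined with the analogous inequality with $k,l$ swapped, whose sum is $2(x_i+x_j)\ge2\varepsilon$), I still need $S_\tau$ bounded. I would obtain that from the upper margin $\le C$ in condition II if needed, or more cleverly by bounding $\sqrt{S_\tau}/(x_i+x_j)$ via $S_\tau\le 2(x_i+x_j)+$ margins. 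Finally, I sum over at most $D_M$ tetrahedra to get $C(M,\varepsilon)$.
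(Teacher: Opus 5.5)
Your reality and positivity arguments are correct. They differ from the paper only in that you derive Glickenstein's signed-height formula rather than quoting it. The factorization $Q^E_{ijkl}=4(x_ix_j+x_ix_k+x_jx_k)-(x_i+x_j+x_k-x_l)^2$ confirms your expression for $h_{ijk,l}^2$. The continuity argument for the sign works because $\{Q^E_{ijkl}>0,\ x_l<x_i+x_j+x_k\}$ is connected: for fixed $x_i,x_j,x_k$ the admissible $x_l$ form a nonempty interval. So $\tfrac12 h_{ijk,l}r_{ijk}/l_{ij}$ is exactly the paper's term $\frac{\rho_{ijkl}}{4l_{ij}}r_{ijk}^2(x_i+x_j+x_k-x_l)$.

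The gap is in the upper bound. Replacing $|x_i+x_j+x_k-x_l|$ by $S_\tau$ throws away exactly the information that keeps the term bounded. What remains, $\sqrt{S_\tau}/(\sqrt{\varepsilon}\,(x_i+x_j))$ (and even $S_\tau/(\sqrt{Q^E_{ijkl}}\,(x_i+x_j))$), is not controlled by $\varepsilon$ under the proposition's hypotheses. For example, take $x_i=x_j=1$ and $x_k=x_l=L\ge1$. The four margins are $2L,2L,2,2$, so the hypothesis holds with $\varepsilon=2$ for every $L$. But $Q^E_{ijkl}=8L$ and $S_\tau=2+2L$, so $S_\tau/(\sqrt{Q^E_{ijkl}}\,(x_i+x_j))\sim\sqrt{L/8}\to\infty$.

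Neither of your remedies closes this. The proposition does not assume the upper margin $C$ of condition II; using it would only give a constant $C(M,\varepsilon,C)$, which is a weaker statement. Expressing $S_\tau$ through $x_i+x_j$ and the margins does not help either, since the margins are only bounded below.

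The missing step is to keep the signed factor and use the companion margin $x_i+x_j+x_l-x_k\ge\varepsilon>0$. This gives $0\le x_i+x_j+x_k-x_l<2(x_i+x_j)$, which cancels the factor $1/(x_i+x_j)$. Each face term is then at most $1/\sqrt{Q^E_{ijkl}}$, so $A_{ij}^{kl}/l_{ij}\le 2/\sqrt{Q^E_{ijkl}}=\rho_{ijkl}$. Next, the four margins sum to $2S_\tau$, so $S_\tau\ge2\varepsilon$ and $Q^E_{ijkl}\ge\varepsilon S_\tau\ge 2\varepsilon^2$. Hence $\rho_{ijkl}\le\sqrt{2}/\varepsilon$ and $\mu_{ij}\le\sqrt{2}\,D_M/\varepsilon$, which is the paper's argument.

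Also, your remark that $A_{ij}^{kl}/l_{ij}$ is scale invariant is wrong. It has the dimension of a length, which is why the correct bound is of order $1/\varepsilon$.
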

\begin{proof}
For one tetrahedron $\{i,j,k,l\}$ containing the edge $ij$, Glickenstein's signed-height computation \cite{Glickenstein2005} gives
\[
 \frac{A_{ij}^{kl}}{l_{ij}}
 =\frac{\rho_{ijkl}}{4l_{ij}}
 \left[
 r_{ijk}^2(x_i+x_j+x_k-x_l)
 +r_{ijl}^2(x_i+x_j+x_l-x_k)
 \right],
\]
where $\rho_{ijkl}$ is the radius of the circumscripted sphere.  By Proposition \ref{prop:E-strong-real}, the condition I inequalities imply that every incident tetrahedron is real.  The two factors
\[
 x_i+x_j+x_k-x_l,\qquad x_i+x_j+x_l-x_k
\]
are non-negative, and they cannot vanish simultaneously.  Hence $A_{ij}^{kl}/l_{ij}>0$ for every incident tetrahedron, and therefore $\mu_{ij}>0$ by the preceding formula for $\mu_{ij}$.

It remains to prove the uniform upper bound.  The elementary identities
\[
 r_{ijk}^2=\frac1{x_ix_j+x_ix_k+x_jx_k},
 \qquad
 l_{ij}=\frac{x_i+x_j}{x_ix_j}
\]
imply
\[
 \frac{r_{ijk}^2(x_i+x_j+x_k-x_l)}{l_{ij}}
 \le \frac{x_i+x_j+x_k-x_l}{x_i+x_j}.
\]
Under the positive margin assumption, the inequality $x_i+x_j+x_l-x_k\ge\varepsilon>0$ gives
\[
 x_i+x_j+x_k-x_l<2(x_i+x_j),
\]
and hence the last expression is bounded by $2$.  The same argument applied to the face $\{i,j,l\}$ gives the same bound for the second term in the bracket.  Thus
\[
 0<\frac{A_{ij}^{kl}}{l_{ij}}\le \rho_{ijkl}.
\]
On the other hand,
\[
 Q^E_{ijkl}
 =\sum_{a\in\{i,j,k,l\}}x_a\bigl(x_i+x_j+x_k+x_l-2x_a\bigr)
 \ge \varepsilon(x_i+x_j+x_k+x_l).
\]
Since the four quantities $x_i+x_j+x_k+x_l-2x_a$ have sum $2(x_i+x_j+x_k+x_l)$, the positive margin assumption also gives
\[
 x_i+x_j+x_k+x_l\ge 2\varepsilon.
\]
Consequently $Q^E_{ijkl}\ge2\varepsilon^2$ and
\[
 \rho_{ijkl}=\frac2{\sqrt{Q^E_{ijkl}}}\le \frac{\sqrt2}{\varepsilon}.
\]
Summing over at most $D_M$ tetrahedra containing the edge $ij$ gives the desired bound.
\end{proof}

\subsection{Short-Time Existence of the Euclidean Flow}
We now prove the short-time existence result in the Euclidean background geometry.

\begin{proof}[\textbf{Proof of Theorem \ref{thm:intro-short}(1)}]
Let $M$ be a degree bound for $\T$, and let $\rho>0$ be as in Theorem \ref{thm:intro-short}(1).
Following the finite Dirichlet approximation for infinite combinatorial Yamabe flows \cite{Ji2025}, choose an exhaustion
$V_1\subset V_2\subset\cdots\subset\T_0$ by finite vertex sets, and denote by $\operatorname{Int}(V_n)$ and $\partial V_n$ the corresponding interior and boundary.  Since $\T$ is locally finite, every fixed vertex belongs to $\operatorname{Int}(V_n)$ for all sufficiently large $n$.  On $V_n$ solve the finite Dirichlet problem
\[
\begin{cases}
 \displaystyle\frac{d u_i^{(n)}}{dt}=-K_i(u^{(n)}),
     & i\in\operatorname{Int}(V_n),\\[4pt]
 u_i^{(n)}(0)=0,
     & i\in V_n,\\
 u_i^{(n)}(t)=0,
     & i\in\partial V_n .
\end{cases}
\]
We also extend $u^{(n)}$ to $\T_0$ by setting $u_j^{(n)}(t)=0$ for $j\notin V_n$.  The curvature $K_i(u^{(n)})$ is computed from the metric $r_j^{(n)}(t)=e^{u_j^{(n)}(t)}r_j^0$; for $i\in\operatorname{Int}(V_n)$ it depends only on vertices in $V_n$.  Since only finitely many variables are free and the metric is initially real, the finite-dimensional ODE has a local solution.

By Lemma \ref{lem:curv-bound} and the degree bound,
\[
 |K_i|\le C_M:=4\pi+2\pi D_M
\]
as long as the finite solution is real.  Hence every finite solution satisfies
\[
 |u_i^{(n)}(t)|\le C_Mt
\]
for all vertices $i$, with the convention $u_i^{(n)}=0$ outside $V_n$.  Choose $t_0>0$ such that $C_Mt_0<\rho/2$.  Then $\|u^{(n)}(t)\|_{\ell^\infty}<\rho/2$ on $[0,t_0]$, so the metric $e^{u^{(n)}(t)}r^0$ remains real on this interval.  The finite trajectory therefore stays in a compact subset of the real region.  By the standard continuation theorem for ODEs, the solution extends to all of $[0,t_0]$.

We next record the estimate needed to extract a $C^1$ limit.  If $\|u\|_{\ell^\infty}\le\rho/2$, then every further perturbation $w$ with $\|w\|_{\ell^\infty}\le\rho/2$ satisfies $\|u+w\|_{\ell^\infty}\le\rho$, and hence $e^{u+w}r^0$ is real.  For a tetrahedron $\tau$, set
\[
 S_\tau=\sum_{a\in\tau}x_a,\qquad p_a=\frac{x_a}{S_\tau},\qquad
 q_\tau=\frac{Q^E_\tau}{S_\tau^2}=1-2\sum_{a\in\tau}p_a^2 .
\]
The logarithmic real-neighborhood condition keeps the normalized shapes
$p=(p_a)_{a\in\tau}$ in a compact subset of the real shape space.  Thus there exists $q_0=q_0(\rho)>0$ such that $q_\tau\ge q_0$ for every tetrahedron and every such $u$.

For $\tau=\{i,j,k,l\}$, since $\partial r_j/\partial u_j=r_j$, Glickenstein's dual-area formula used above gives
\[
 \left|\frac{\partial\alpha_{ijkl}}{\partial u_j}\right|
 =\left|\frac{\partial\alpha_{ijkl}}{\partial r_j}r_j\right|
 =\left|\frac{A_{ij}^{kl}}{r_il_{ij}}\right|.
\]
Using the formula for $A_{ij}^{kl}$, we have
\[
\begin{aligned}
\left|\frac{A_{ij}^{kl}}{r_il_{ij}}\right|
&\le \frac{\rho_{ijkl}}{4}
\left[
\frac{r_{ijk}^2}{r_il_{ij}}\left|x_i+x_j+x_k-x_l\right|
+\frac{r_{ijl}^2}{r_il_{ij}}\left|x_i+x_j+x_l-x_k\right|
\right]  \\
&= \frac{\rho_{ijkl}}{4}
\left[
\frac{r_{ijk}^2}{r_il_{ij}}\left|S_\tau-2x_l\right|
+\frac{r_{ijl}^2}{r_il_{ij}}\left|S_\tau-2x_k\right|
\right].
\end{aligned}
\]
Moreover,
\[
 \rho_{ijkl}=\frac2{\sqrt{Q^E_\tau}}\le\frac2{S_\tau\sqrt{q_0}},\qquad
 |S_\tau-2x_a|\le S_\tau,\qquad
 \frac{r_{ijk}^2}{r_il_{ij}}\le1,
\]
and the same estimate holds with $k$ replaced by $l$.  Therefore
\[
\left|\frac{A_{ij}^{kl}}{r_il_{ij}}\right|
\le \frac{\rho_{ijkl}}{4}(S_\tau+S_\tau)
\le \frac{1}{\sqrt{q_0}}.
\]
Thus
\[
 \left|\frac{\partial\alpha_{ijkl}}{\partial u_j}\right|\le C(\rho).
\]
By Lemma \ref{lem:E-schlafli-solid},
\[
 \sum_{a\in\tau}\frac{\partial\alpha_{ijkl}}{\partial u_a}=0.
\]
Hence the diagonal derivative is bounded by the off-diagonal ones.
Summing over the at most $D_M$ tetrahedra incident to a vertex gives
\[
 \left|\frac{\partial K_i}{\partial u_a}(u)\right|\le C(M,\rho)
\]
whenever $\|u\|_{\ell^\infty}\le\rho/2$.  For $i\in\operatorname{Int}(V_n)$,
\[
 (u_i^{(n)})''
 =-\sum_a\frac{\partial K_i}{\partial u_a}(u^{(n)})(u_a^{(n)})',
\]
where the sum is over the closed tetrahedral star of $i$.  Since $|(u_a^{(n)})'|\le C_M$, after changing the constant we obtain
\[
 |(u_i^{(n)})''|\le C(M,\rho)
\]
on $[0,t_0]$.  Boundary and exterior vertices are fixed, so the same bound is trivial there.

For each fixed vertex $i$, the functions $u_i^{(n)}$ are therefore uniformly bounded in $C^2([0,t_0])$ for all sufficiently large $n$.  By Arzela--Ascoli and a diagonal argument over the vertices, a subsequence converges in $C^1([0,t_0])$ at every vertex to a function $u_i(t)$.  The bound above passes to the limit, so $\|u(t)\|_{\ell^\infty}<\rho/2$ on $[0,t_0]$ and the limiting metric remains real.

It remains to pass the equation to the limit. For all sufficiently large $n$, the closed tetrahedral star of $i$ is contained in $V_n$, and hence
\[
 u_i^{(n)}(t)=-\int_0^t K_i(u^{(n)}(s))\,ds .
\]
The curvature $K_i$ depends only on finitely many variables in this star and is smooth on the real region.  Since these finitely many coordinates of $u^{(n)}$ converge uniformly to those of $u$, we may pass to the limit in the integral and obtain
\[
 u_i(t)=-\int_0^t K_i(u(s))\,ds .
\]
Thus $u_i\in C^1([0,t_0))$ and $u_i'=-K_i(u)$ for every vertex $i$.  Since each $K_i$ is a smooth function of finitely many real local variables, repeated differentiation of the local system shows that $u_i$ is smooth in time.
\end{proof}

\subsection{Uniqueness of the Euclidean Flow}
The uniqueness argument reduces the difference of two solutions to a linear parabolic equation on the infinite graph.  The next estimate verifies the sign and bounded weighted-degree hypotheses needed for the maximum principle.

\begin{lemma}[Euclidean coefficient estimate]\label{lem:Epos}
Assume that the Euclidean strong non-degenerate condition II in Definition \ref{def:intro-strong} holds with constants $\varepsilon,C$.  Then, for every edge $ij$, $\mu_{ij}\ge0$, and
\[
 \frac1{r_i}\sum_{j:j\sim i}\mu_{ij}\le C(M,\varepsilon,C).
\]
\end{lemma}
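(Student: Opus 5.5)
Condition II contains the positive-margin hypothesis of Proposition \ref{prop:E-mu-positive}, so I will not re-derive the sign or the per-edge bound; the only new content is the factor $1/r_i=x_i$, which needs the upper margin $C$.

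\textbf{Step 1 (sign and per-edge bound).} Condition II gives, for every ordered tetrahedron $\{a,b,c,d\}$,
\[
 x_a\le x_b+x_c+x_d .
\]
This is condition I, so Proposition \ref{prop:E-mu-positive} yields that all incident tetrahedra are real and $\mu_{ij}>0$. The lower margin $\varepsilon$ also gives $0<\mu_{ij}\le C(M,\varepsilon)$ from the same proposition. Since $i$ has at most $M$ neighbours, it remains to bound $x_i$.

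\textbf{Step 2 (naive attempt).} Summing the condition II inequality over the ordering with $x_i$ last, i.e.\ $x_j+x_k+x_l-x_i\le C$, controls only differences, not $x_i$ itself. I therefore do not bound $x_i$ separately. Instead I refine the per-tetrahedron estimate so that $x_i$ is absorbed into the geometry.

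\textbf{Step 3 (per-tetrahedron estimate).} By Glickenstein's formula,
\[
 \frac{x_iA_{ij}^{kl}}{l_{ij}}
 =\frac{\rho_{ijkl}}{4}\Bigl[x_i\frac{r_{ijk}^2}{l_{ij}}(S-2x_l)+x_i\frac{r_{ijl}^2}{l_{ij}}(S-2x_k)\Bigr].
\]
Using $r_{ijk}^2=1/(x_ix_j+x_ix_k+x_jx_k)$ and $l_{ij}=(x_i+x_j)/(x_ix_j)$,
\[
 \frac{x_ir_{ijk}^2}{l_{ij}}
 =\frac{x_i^2x_j}{(x_i+x_j)(x_ix_j+x_ix_k+x_jx_k)}
 \le\frac{x_i}{x_i+x_j}\le 1 .
\]
By the upper margin, $0\le S-2x_l\le C$, and likewise $0\le S-2x_k\le C$. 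Proposition \ref{prop:E-mu-positive} gives $\rho_{ijkl}\le\sqrt2/\varepsilon$. Hence each term satisfies
\[
 \frac{x_iA_{ij}^{kl}}{l_{ij}}\le \frac{\sqrt2\,C}{2\varepsilon}.
\]

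\textbf{Step 4 (summation).} There are at most $D_M$ tetrahedra per edge and at most $M$ edges at $i$. This gives
\[
 \frac1{r_i}\sum_{j:j\sim i}\mu_{ij}\le M D_M\frac{C}{\sqrt2\,\varepsilon}.
\]

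The main obstacle is the absorption step in Step 3: the factor $x_i$ must be cancelled by $r_{ijk}^2/l_{ij}$, and the identities above make this explicit. The remaining work is keeping track of the constants.
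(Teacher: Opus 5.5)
Your proof is correct and is essentially the paper's own argument: the same Glickenstein dual-area formula multiplied by $x_i=1/r_i$, the cancellation $x_ir_{ijk}^2/l_{ij}\le 1$, the upper margin $C$ on the two bracket factors, the bound $\rho_{ijkl}\le\sqrt2/\varepsilon$, and a sum over boundedly many tetrahedral contributions. One small correction to your Step 2 aside, which you never use: condition II does control $x_i$ directly, since summing $S_\tau-2x_a\le C$ over the four vertices of a tetrahedron gives $2S_\tau\le 4C$ and hence $x_i<S_\tau/2\le C$, so combining this with the per-edge bound of Proposition \ref{prop:E-mu-positive} would already give the estimate.
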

\begin{proof}
By Proposition \ref{prop:E-mu-positive}, all incident tetrahedra are real and $\mu_{ij}\ge0$.  It remains to record the above estimate needed for the maximum principle.

For one tetrahedron $\{i,j,k,l\}$ containing the edge $ij$, the same dual-area formula used in Proposition \ref{prop:E-mu-positive} gives
\[
 \frac{A_{ij}^{kl}}{r_i l_{ij}}
 =\frac{\rho_{ijkl}}{4r_i l_{ij}}
 \left[
 r_{ijk}^2(x_i+x_j+x_k-x_l)
 +r_{ijl}^2(x_i+x_j+x_l-x_k)
 \right].
\]
Under condition II, $x_i+x_j+x_k-x_l$ and $x_i+x_j+x_l-x_k$ are non-negative and bounded above by $C$.  Moreover,
\[
 \frac{r_{ijk}^2}{r_i l_{ij}}
 =\frac{x_i^2x_j}
 {(x_i+x_j)(x_ix_j+x_ix_k+x_jx_k)}
 \le1,
\]
and the same estimate holds with $k$ replaced by $l$.  The positive lower margin in condition II also gives the uniform lower bound $Q^E_{ijkl}\ge2\varepsilon^2$, as shown in Proposition \ref{prop:E-mu-positive}; hence $\rho_{ijkl}=2/\sqrt{Q^E_{ijkl}}$ is uniformly bounded by a constant depending only on $\varepsilon$.  Consequently
\[
 \frac{A_{ij}^{kl}}{r_i l_{ij}}\ge0,\qquad
 \frac{A_{ij}^{kl}}{r_i l_{ij}}\le C(\varepsilon,C)
\]
for each tetrahedral contribution.  Since at most $D_M$ tetrahedra are incident to a fixed vertex and each such tetrahedron contributes to at most three edge terms adjacent to $i$, summing over all neighbors $j$ gives
\[
 \frac1{r_i}\sum_{j:j\sim i}\mu_{ij}
 =\sum_{j:j\sim i}\sum_{\{i,j,k,l\}\in\T_3}
 \frac{A_{ij}^{kl}}{r_i l_{ij}}
 \le C(M,\varepsilon,C).
\]
\end{proof}

We now prove uniqueness of the infinite Euclidean flow.
\begin{proof}[\textbf{Proof of Theorem \ref{thm:intro-unique}(1)}]
Put $u=\log r$, $\hat u=\log\hat r$, and $g=u-\hat u$.  By Newton--Leibniz and Proposition \ref{prop:Eevol},
\[
\begin{aligned}
 K_i(u(t))-K_i(\hat u(t))
 &=\int_0^1\frac{d}{ds}K_i(u^s(t))\,ds  \\
 &=-\int_0^1\frac1{r_i^s(t)}
   \sum_{j:j\sim i}\mu_{ij}(u^s(t))(g_j(t)-g_i(t))\,ds,
\end{aligned}
\]
where $u^s=s u+(1-s)\hat u$ and $r_i^s=e^{u_i^s}$.  Hence
\[
 g_i'=-(K_i(u)-K_i(\hat u))
     =\sum_{j:j\sim i}\omega_{ij}(t)(g_j-g_i),
\]
where
\[
 \omega_{ij}(t)=\int_0^1 \frac{\mu_{ij}(u^s(t))}{r_i^s(t)}\,ds .
\]
By assumption, the interpolating metrics $r^s(t)$ satisfy condition II with the same constants.  This holds for all $s\in[0,1]$ and $t\in[0,T]$.
Lemma \ref{lem:Epos} gives $\omega_{ij}\ge0$ and
\[
 \sum_{j:j\sim i}\omega_{ij}(t)
 =\int_0^1 \frac1{r_i^s(t)}\sum_{j:j\sim i}\mu_{ij}(u^s(t))\,ds
 \le C(M,\varepsilon,C).
\]
Moreover, $g$ is bounded on $\T_0\times[0,T]$.  Indeed, $g_i(0)=0$ and Lemma \ref{lem:curv-bound} gives $|K_i|,|\hat K_i|\le C_M$, so
\[
 |g_i(t)|\le \int_0^t |K_i(u(s))-K_i(\hat u(s))|\,ds\le 2C_MT .
\]
Applying Lemma \ref{lem:mp} with $a_i\equiv1$ and $h_i\equiv0$ to $g$ and to $-g$ gives $g\equiv0$.  Therefore $u=\hat u$ and hence $r=\hat r$.
\end{proof}

\section{Hyperbolic Background Geometry}
\subsection{Hyperbolic Ball-Packings and Curvature Evolution}
In the hyperbolic background geometry, a ball-packing metric assigns a radius $r_i>0$ to each vertex and edge length $l_{ij}=r_i+r_j$ to each edge.  Following the finite-dimensional hyperbolic ball-packing theory \cite{CooperRivin1996,GeHua2020}, set $c_i=\coth r_i$.  For a tetrahedron $\{i,j,k,l\}$, the six lengths $l_{ab}=r_a+r_b$ determine a non-degenerate hyperbolic tetrahedron precisely when
\begin{equation}\label{eq:QH}
 Q^H_{ijkl}=(c_i+c_j+c_k+c_l)^2-2(c_i^2+c_j^2+c_k^2+c_l^2)+4>0.
\end{equation}

\begin{proposition}[Strong non-degeneracy implies hyperbolic reality]\label{prop:H-strong-real}
Let $\tau=\{i,j,k,l\}$ be a hyperbolic ball-packing tetrahedron and set $S_\tau=c_i+c_j+c_k+c_l$.  If
\[
 c_a\le S_\tau-c_a,\qquad a\in\{i,j,k,l\},
\]
then $Q^H_{ijkl}\ge4>0$, and hence $\tau$ is real.
\end{proposition}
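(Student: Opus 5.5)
The plan is to reuse the algebra from the proof of Proposition \ref{prop:E-strong-real} and to note that the extra constant $+4$ in $Q^H$ is what gives the uniform lower bound. First I rewrite the quadratic part of $Q^H_{ijkl}$ as a weighted sum of the margins. Since $\sum_a c_a=S_\tau$, we have $S_\tau^2=\sum_a c_a S_\tau$, and therefore
\[
 (c_i+c_j+c_k+c_l)^2-2(c_i^2+c_j^2+c_k^2+c_l^2)
 =\sum_{a\in\tau}c_a\bigl(S_\tau-2c_a\bigr).
\]
This is the same identity that was used for $Q^E$ in the proof of Proposition \ref{prop:E-mu-positive}.

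Next I use the sign information. In hyperbolic geometry $c_a=\coth r_a>1>0$, and the hypothesis $c_a\le S_\tau-c_a$ says exactly that $S_\tau-2c_a\ge0$. Every summand is therefore non-negative, and so
\[
 Q^H_{ijkl}=\sum_{a\in\tau}c_a(S_\tau-2c_a)+4\ge4>0.
\]
By the reality criterion \eqref{eq:QH}, the tetrahedron $\tau$ is real.

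There is no real obstacle here. The only point to check is the positivity of the $c_a$, which follows from $r_a>0$, so the normalization argument with $p_a$ used in the Euclidean case is not needed. One can also remark that equality $Q^H=4$ holds only when all margins vanish, which is impossible for four positive numbers. This gives the strict inequality $Q^H>4$, although the statement only requires $Q^H\ge4$.
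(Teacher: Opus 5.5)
Your proof is correct and is essentially the paper's argument. Like the paper, you rewrite $Q^H_{ijkl}=\sum_{a\in\tau}c_a(S_\tau-2c_a)+4$ and use $c_a>0$ together with $S_\tau-2c_a\ge0$; your side remark that the inequality is in fact strict (the margins sum to $2S_\tau>0$) is also correct, though the statement does not need it.
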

\begin{proof}
Since
\[
 Q^H_{ijkl}=\sum_{a\in\tau}c_a(S_\tau-2c_a)+4,
\]
the inequalities $S_\tau-2c_a\ge0$ imply $Q^H_{ijkl}\ge4>0$.
\end{proof}

Thus the hyperbolic strong non-degenerate condition I in Definition \ref{def:intro-strong} is a sufficient condition for all tetrahedra to be real.
The curvature is again
\[
 K_i=4\pi-\sum_{\{i,j,k,l\}\in\T_3}\alpha_{ijkl},
\]
and the hyperbolic Yamabe flow is
\begin{equation}\label{eq:Hflow}
 r_i'=-K_i\sinh r_i.
\end{equation}
With $w_i=\log\tanh(r_i/2)$, this is $w_i'=-K_i$, since $dr_i/dw_i=\sinh r_i$.

Define
\[
 \mu_{ij}=\sum_{\{i,j,k,l\}\in\T_3}\frac{\partial\alpha_{ijkl}}{\partial r_j}\sinh r_i\sinh r_j
\]
and
\[
 h_i=\sum_{\{i,j,k,l\}\in\T_3}\sum_{a\in\{i,j,k,l\}}\frac{\partial\alpha_{ijkl}}{\partial r_a}\sinh r_a.
\]

\begin{proposition}[Hyperbolic curvature evolution]\label{prop:Hevol}
Let $r(t)$ be a smooth family of real hyperbolic ball-packing metrics, and set $w_i(t)=\log\tanh(r_i(t)/2)$.  Then, for every vertex $i$,
\[
 K_i'=-\frac1{\sinh r_i}\sum_{j:j\sim i}\mu_{ij}(w_j'-w_i')-h_iw_i'.
\]
Along \eqref{eq:Hflow},
\begin{equation}\label{eq:Hheat}
 K_i'=\frac1{\sinh r_i}\sum_{j:j\sim i}\mu_{ij}(K_j-K_i)+h_iK_i.
\end{equation}
\end{proposition}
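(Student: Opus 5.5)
The plan is a direct chain-rule computation, in the same way as Proposition \ref{prop:Eevol}, with the variable $w$ in place of $u$. The one new ingredient is that the hyperbolic Schl\"afli identity does not give Euler homogeneity, so a zeroth-order term survives; this term is exactly $h_i$. Since $w_a=\log\tanh(r_a/2)$ gives $dr_a/dw_a=\sinh r_a$, we have $r_a'=\sinh r_a\,w_a'$. For a fixed real tetrahedron $\tau=\{i,j,k,l\}$ incident to $i$, I would write
\[
 \frac{d}{dt}\alpha_{ijkl}=\sum_{a\in\tau}\frac{\partial\alpha_{ijkl}}{\partial r_a}\sinh r_a\,w_a'
 =w_i'\sum_{a\in\tau}\frac{\partial\alpha_{ijkl}}{\partial r_a}\sinh r_a
 +\sum_{a\in\{j,k,l\}}\frac{\partial\alpha_{ijkl}}{\partial r_a}\sinh r_a\,(w_a'-w_i').
\]
This is just the split ``add and subtract $w_i'$'' used in the Euclidean proof.

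Next I would sum over all tetrahedra containing $i$ and use $K_i'=-\sum\frac{d}{dt}\alpha_{ijkl}$. The first term, summed over $\tau\ni i$, is precisely $h_iw_i'$ by the definition of $h_i$, so it contributes $-h_iw_i'$. For the second term, I multiply and divide by $\sinh r_i$ and group the summands by the neighbor $a=j$, exactly as in the proof of Proposition \ref{prop:Eevol}. The coefficient of $(w_j'-w_i')$ is then
\[
 \frac1{\sinh r_i}\sum_{\{i,j,k,l\}\ni ij}\frac{\partial\alpha_{ijkl}}{\partial r_j}\sinh r_i\sinh r_j=\frac{\mu_{ij}}{\sinh r_i}.
\]
This yields the first formula. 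Local finiteness makes every sum finite, so differentiating term by term is legitimate. Smoothness of $\alpha$ on the real region justifies the chain rule.

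Finally, along \eqref{eq:Hflow} we have $w_a'=-K_a$ for all $a$. Hence $w_j'-w_i'=K_i-K_j$ and $-h_iw_i'=h_iK_i$, and substituting these gives \eqref{eq:Hheat}.

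There is no real obstacle here. The only point needing care is not to invoke an analogue of Lemma \ref{lem:E-schlafli-solid}: in hyperbolic geometry the Schl\"afli formula carries a volume term, $\sum l_{ab}\,d\beta_{ab}=-2\,d\mathrm{Vol}$, so $\sum_a\sinh r_a\,\partial\alpha_{i,\tau}/\partial r_a$ need not vanish. That is exactly why $h_i$ must be kept as a separate term.
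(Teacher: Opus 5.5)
Your proposal is correct and matches the paper's proof step for step. Both use the substitution $r_a'=\sinh r_a\,w_a'$, then add and subtract $w_i'$ in each incident tetrahedron. The $w_i'$-coefficient is identified as $h_i$, and the grouped off-diagonal terms as $\mu_{ij}/\sinh r_i$, before substituting $w_a'=-K_a$. Your remark that the volume term in the hyperbolic Schlaefli formula is why $h_i$ survives is the same comment the paper makes.
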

\begin{proof}
For such a family, $dr_a/dw_a=\sinh r_a$, and hence
\[
 r_a'=\sinh r_a\,w_a' .
\]
Fix a tetrahedron $\tau=\{i,j,k,l\}$ incident to $i$.  By the chain rule,
\[
\begin{aligned}
 \frac{d}{dt}\alpha_{ijkl}
 &=\sum_{a\in\{i,j,k,l\}}
   \frac{\partial\alpha_{ijkl}}{\partial r_a}\sinh r_a\,w_a'  \\
 &=\sum_{a\in\{j,k,l\}}
   \frac{\partial\alpha_{ijkl}}{\partial r_a}\sinh r_a(w_a'-w_i')
   +\sum_{a\in\{i,j,k,l\}}
   \frac{\partial\alpha_{ijkl}}{\partial r_a}\sinh r_a\,w_i'  \\
 &=\frac1{\sinh r_i}\sum_{a\in\{j,k,l\}}
   \frac{\partial\alpha_{ijkl}}{\partial r_a}\sinh r_i\sinh r_a(w_a'-w_i')
   +\sum_{a\in\{i,j,k,l\}}
   \frac{\partial\alpha_{ijkl}}{\partial r_a}\sinh r_a\,w_i' .
\end{aligned}
\]
Differentiating $K_i=4\pi-\sum\alpha_{ijkl}$ and summing over all tetrahedra incident to $i$ give
\[
\begin{aligned}
 K_i'
 &=-\frac1{\sinh r_i}
   \sum_{\{i,j,k,l\}\in\T_3}
   \left[
   \frac{\partial\alpha_{ijkl}}{\partial r_j}\sinh r_i\sinh r_j(w_j'-w_i')
   +\frac{\partial\alpha_{ijkl}}{\partial r_k}\sinh r_i\sinh r_k(w_k'-w_i')\right.\\
 &\hspace{4.9cm}\left.
   +\frac{\partial\alpha_{ijkl}}{\partial r_l}\sinh r_i\sinh r_l(w_l'-w_i')
   \right] \\
 &\quad
   -\sum_{\{i,j,k,l\}\in\T_3}
   \sum_{a\in\{i,j,k,l\}}
   \frac{\partial\alpha_{ijkl}}{\partial r_a}\sinh r_a\,w_i' .
\end{aligned}
\]
We now group the first double sum according to the vertex paired with $i$.  The coefficient of $w_j'-w_i'$ is
\[
 \sum_{\{i,j,k,l\}\in\T_3}
 \frac{\partial\alpha_{ijkl}}{\partial r_j}\sinh r_i\sinh r_j
 =\mu_{ij},
\]
where the sum is over tetrahedra containing the edge $ij$.  The coefficient of $w_i'$ in the second term is exactly $h_i$.  Therefore
\[
 K_i'=-\frac1{\sinh r_i}\sum_{j:j\sim i}\mu_{ij}(w_j'-w_i')-h_iw_i' .
\]
Compared with the Euclidean curvature evolution, the additional term involving $h_i$ arises from the volume term in the hyperbolic Schlaefli formula \cite{GeHua2020}.  Along \eqref{eq:Hflow}, one has $w_i'=-K_i$.  Hence
\[
 w_j'-w_i'=(-K_j)-(-K_i)=K_i-K_j,\qquad -h_iw_i'=h_iK_i,
\]
and the preceding formula becomes \eqref{eq:Hheat}.
\end{proof}

\begin{lemma}\label{lem:Hbounds}
Assume that $\T$ has degree bounded by $M$.  Fix a constant $r_*>0$.  If a hyperbolic ball-packing metric satisfies
\[
 r_a\ge r_*\quad\text{for all }a\in\T_0,\qquad
 c_i\le c_j+c_k+c_l
\]
for every ordered tetrahedron $\{i,j,k,l\}\in\T_3$, then all tetrahedra are real and there exists a constant $C=C(M,r_*)$ such that
\[
 \mu_{ij}\ge0,\qquad |h_i|\le C,
\]
and
\[
 \frac1{\sinh r_i}\sum_{j:j\sim i}\mu_{ij}\le C.
\]
\end{lemma}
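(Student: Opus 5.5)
Reality of every tetrahedron is immediate from Proposition \ref{prop:H-strong-real}: the hypothesis $c_i\le c_j+c_k+c_l$ for every ordering gives $S_\tau-2c_a\ge0$, hence $Q^H_\tau\ge4$. This uniform lower bound, which holds without any margin $\varepsilon$, is the basic quantitative input. Since $r_a\ge r_*$, we also have $1<c_a\le\coth r_*$. Every tetrahedron shape is therefore determined by four numbers $c_a$ in the compact interval $[1,\coth r_*]$, subject to the closed constraints $S_\tau-2c_a\ge0$. We work throughout in the variables $c_a$ and treat each tetrahedral quantity as a function on this set.

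For the sign of $\mu_{ij}$, we use the hyperbolic analogue of Glickenstein's dual-area formula from Ge--Hua \cite{GeHua2020}. It expresses $\frac{\partial\alpha_{ijkl}}{\partial r_j}\sinh r_i\sinh r_j$ as a positive factor times a combination
\[
 \lambda_{ijk}\,(c_i+c_j+c_k-c_l)+\lambda_{ijl}\,(c_i+c_j+c_l-c_k),
\]
where $\lambda_{ijk},\lambda_{ijl}>0$ are face quantities (the hyperbolic inradius terms), and the positive prefactor is of the form $1/\sqrt{Q^H_\tau}$ up to smooth positive functions. Under condition I both brackets are nonnegative and cannot vanish together, so each tetrahedral contribution is nonnegative and $\mu_{ij}\ge0$. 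If the exact hyperbolic formula is not available in this form, the fallback is a perturbation argument in the style of Proposition \ref{prop:E-mu-positive}: the sign of the off-diagonal derivative is controlled by the signed distance of the center of the edge-tangent sphere to the faces, and that distance has the sign of $c_i+c_j+c_k-c_l$.

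For the upper bounds, one might first try compactness: each summand in $\mu_{ij}/\sinh r_i$ and in $h_i$ is a continuous function of $(c_i,c_j,c_k,c_l)$ on the real region $Q^H>0$. That region contains the set $\{c_a\in[1,\coth r_*],\ S_\tau-2c_a\ge0\}$, since $Q^H\ge4$ there. However, $c_a\to1$ corresponds to $r_a\to\infty$, so the set is not compact in the $r$ variables, and $\sinh r_a$ appears in the formulas. The real work is therefore to show that each quantity, rewritten in the $c$ variables, extends continuously to $c_a=1$. For $\mu_{ij}/\sinh r_i=\sum\frac{\partial\alpha}{\partial r_j}\sinh r_j$, use $\sinh r=1/\sqrt{c^2-1}$ and $\partial_{r_j}=-(c_j^2-1)\partial_{c_j}$. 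This gives
\[
 \frac{\partial\alpha}{\partial r_j}\sinh r_j=-\sqrt{c_j^2-1}\,\frac{\partial\alpha}{\partial c_j}.
\]
Since $\alpha$ is a smooth function of $c$ wherever $Q^H>0$ (through the Gram matrix of the tetrahedron), this is bounded on the closed set where $c_a\in[1,\coth r_*]$ and $Q^H\ge4$, which is compact in $c$-coordinates. The same rewriting handles $h_i$. Summing over at most $D_M$ tetrahedra and at most $M$ neighbours then gives $C(M,r_*)$.

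The main obstacle is verifying that the solid angle is genuinely smooth, or at least $C^1$, as a function of $c$ up to the boundary $c_a=1$, where the ball becomes a horoball. The first thing to try is to write the Gram matrix entries $\cos\beta$ explicitly in terms of the $c_a$. The hyperbolic cosine law on faces with $\cosh(r_a+r_b)=\frac{c_ac_b+1}{\sqrt{(c_a^2-1)(c_b^2-1)}}$ should make the face angles rational in the $c_a$ and the square roots of quantities like $c_ac_b+c_ac_c+c_bc_c-1$. Those quantities are bounded below by $2$ when all $c\ge1$, so every denominator stays away from zero.
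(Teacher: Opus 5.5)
Your reality argument and your sign argument are sound; in fact the form you conjecture is exactly what the Ge--Hua formula quoted in the paper yields. Put $F_{ijk}=c_ic_j+c_ic_k+c_jc_k+1=\coth^2\rho_{ijk}$, where $\rho_{ijk}$ is the inradius of the face $\{i,j,k\}$; your ``$-1$'' should be ``$+1$''. Substituting $\sinh(r_i+r_j+r_k)=F_{ijk}\sinh r_i\sinh r_j\sinh r_k$ into $P_{ij}^{kl}B_{ij}^{kl}$ and expanding gives
\[
 \frac{\partial\alpha_{ijkl}}{\partial r_j}\sinh r_i\sinh r_j=\frac{1}{\sqrt{Q^H_\tau}\,\sinh l_{ij}}\left[\frac{c_i+c_j+c_k-c_l}{F_{ijk}}+\frac{c_i+c_j+c_l-c_k}{F_{ijl}}\right].
\]
Under condition I the two numerators are nonnegative and sum to $2(c_i+c_j)>0$, so positivity is immediate and the paper's case analysis is unnecessary.

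For the upper bounds you take a genuinely different route. The paper stays in the $r$ variables, using the explicit Ge--Hua derivative formulas and $\sinh(x+y+z)\ge4\sinh x\sinh y\sinh z$. You instead pass to $c$-coordinates and invoke compactness of $\{c\in[1,\coth r_*]^4: S_\tau-2c_a\ge0\}$. That reduction is valid. If completed, it would treat diagonal and off-diagonal derivatives uniformly, which is cleaner than the paper's terse handling of the diagonal term.

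The gap is that the step carrying all the content is asserted, flagged, and left open. That step is showing that $\alpha_{a,\tau}$ extends as a $C^1$ function of $c$ to the points of that set where some $c_b=1$; it would even suffice that $\sqrt{c_b^2-1}\,\partial_{c_b}\alpha_{a,\tau}$ stays bounded there. Moreover, the verification you sketch fails as stated. Write $\theta^{(a)}_{bc}$ for the angle at $a$ of the face $\{a,b,c\}$. The cosines are indeed rational, $\cos\theta^{(a)}_{bc}=\frac{F_{abc}-(c_a^2-1)}{F_{abc}+(c_a^2-1)}$. But the dihedral angle $\beta_{ab}$ comes from the spherical cosine law in the link of $a$, whose denominator
\[
 \sin\theta^{(a)}_{bc}\,\sin\theta^{(a)}_{bd}=\frac{4(c_a^2-1)\sqrt{F_{abc}F_{abd}}}{(F_{abc}+c_a^2-1)(F_{abd}+c_a^2-1)}
\]
vanishes at $c_a=1$, because face angles at an ideal vertex are zero. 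So it is false that every denominator stays away from zero. At the corner $c\equiv1$, which your compact set contains, every vertex is ideal.

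To finish, you would have to show that the numerator $\cos\theta^{(a)}_{cd}-\cos\theta^{(a)}_{bc}\cos\theta^{(a)}_{bd}$ carries the same factor $c_a^2-1$. It does, since each $1-\cos\theta^{(a)}$ does, so $\cos\beta_{ab}$ is smooth in $c$ across $c_a=1$. You would then need to check that $|\cos\beta_{ab}|<1$ persists at the boundary points of your set; this is where a lower bound on $Q^H_\tau$ has to be used.

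Alternatively, fall back on explicit formulas as the paper does. Dividing the display above by $\sinh r_i$ bounds the off-diagonal terms directly, since $r\ge r_*$, $\sqrt{Q^H_\tau}\ge2$ and $F\ge4$. The diagonal derivative entering $h_i$, however, still requires either Ge--Hua's diagonal formula or the regularity argument just described.
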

\begin{proof}
For one tetrahedron $\tau=\{i,j,k,l\}$, write $y_a=c_a=\coth r_a$.  The assumptions imply
\[
 1\le y_a\le c_*:=\coth r_*,\qquad S_\tau-2y_a\ge0 .
\]
By Proposition \ref{prop:H-strong-real}, every tetrahedron is real and $Q^H_\tau\ge4$.

We first prove the sign of the off-diagonal coefficients.  The hyperbolic solid-angle derivative formula of Ge--Hua \cite{GeHua2020} gives, for $j\ne i$,
\[
\frac{\partial\alpha_{ijkl}}{\partial r_j}
=P_{ij}^{kl}B_{ij}^{kl},
\]
where
\[
B_{ij}^{kl}
=2-(y_k-y_l)^2+y_i(y_j+y_k+y_l)+y_j(y_i+y_k+y_l),
\]
and
\[
P_{ij}^{kl}
=\frac{\sinh r_k\sinh r_l}
{\sqrt{Q^H_\tau}\sinh(r_i+r_j+r_k)\sinh(r_i+r_j+r_l)}>0 .
\]
Put
\[
 p=y_i+y_j,\qquad q=y_k+y_l,\qquad d=|y_k-y_l|.
\]
The inequalities $y_k\le y_i+y_j+y_l$ and $y_l\le y_i+y_j+y_k$ give $d\le p$, while $y_k,y_l\ge1$ gives $d\le q-2$.  Then
\[
 B_{ij}^{kl}=2+2y_iy_j+pq-d^2 .
\]
If $q\ge p$, then $d^2\le p^2\le pq$, so $B_{ij}^{kl}\ge2+2y_iy_j>0$.  If $q<p$, then
\[
 B_{ij}^{kl}\ge 2+2y_iy_j+pq-(q-2)^2 .
\]
As a function of $q\in[2,p]$, the right-hand side is concave.  Its values at $q=2$ and $q=p$ are respectively
\[
 2+2y_iy_j+2p,\qquad 2y_iy_j+4p-2,
\]
which are positive because $y_i,y_j\ge1$.  Thus $B_{ij}^{kl}>0$ in this case as well.  Hence $\partial\alpha_{ijkl}/\partial r_j>0$, and consequently $\mu_{ij}\ge0$.

We now prove the uniform bounds.  Since the variables $y_a$ stay in the compact interval $[1,c_*]$ and $Q^H_\tau$ is uniformly bounded away from zero, $B_{ij}^{kl}$ is uniformly bounded.  To estimate the hyperbolic sine factors, we use
\begin{equation}\label{eq:H-sinh-three}
 \sinh(x+y+z)\ge4\sinh x\sinh y\sinh z,\qquad x,y,z>0.
\end{equation}
By \eqref{eq:H-sinh-three} and $r_i\ge r_*$,
\[
 \frac{\sinh r_j\sinh r_k\sinh r_l}
 {\sinh(r_i+r_j+r_k)\sinh(r_i+r_j+r_l)}
 \le C(r_*),
\]
so each off-diagonal contribution
\[
 \left|\frac{\partial\alpha_{ijkl}}{\partial r_j}\sinh r_j\right|
\]
is bounded by a constant depending only on $r_*$.  For the diagonal term, the derivative formula in \cite{GeHua2020} can be written as
\[
 \frac{\partial\alpha_{ijkl}}{\partial r_i}\sinh r_i
 =-\frac{R_i^{jkl}}{\sqrt{Q^H_\tau}}\,
 \Phi(y_i,y_j,y_k,y_l,Q^H_\tau),
\]
where $\Phi$ is uniformly bounded under the same assumptions and
\[
\begin{aligned}
 R_i^{jkl}
 &=
 \frac{\sinh^2 r_i\sinh^2 r_j\sinh^2 r_k\sinh^2 r_l}
 {\sinh(r_i+r_j+r_k)\sinh(r_i+r_j+r_l)\sinh(r_i+r_k+r_l)} .
\end{aligned}
\]
By \eqref{eq:H-sinh-three}, the hyperbolic sine ratio $R_i^{jkl}$ is uniformly bounded.
Therefore
\[
 \left|\frac{\partial\alpha_{ijkl}}{\partial r_a}\sinh r_a\right|\le C(r_*),
 \qquad a\in\{i,j,k,l\},
\]
for every incident tetrahedron.  Summing over at most $D_M$ tetrahedra incident to $i$ gives $|h_i|\le C(M,r_*)$, while
\[
 \frac1{\sinh r_i}\sum_{j:j\sim i}\mu_{ij}
 =\sum_{j:j\sim i}\sum_{\{i,j,k,l\}\in\T_3}
 \frac{\partial\alpha_{ijkl}}{\partial r_j}\sinh r_j
\]
is bounded by the same single-tetrahedron estimate summed over the at most $3D_M$ corresponding terms.
\end{proof}

\subsection{Short-Time Existence of the Hyperbolic Flow}
We now prove the short-time existence result in the hyperbolic background geometry.

\begin{proof}[\textbf{Proof of Theorem \ref{thm:intro-short}(2)}]
Let $M$ be a degree bound for $\T$, and let $\rho>0$ be as in Theorem \ref{thm:intro-short}(2).  Write $w_i^0=\log\tanh(r_i^0/2)$ and use the relative variables
\[
 \xi_i=w_i-w_i^0 .
\]
By assumption, if $\|\xi\|_{\ell^\infty}\le\rho$, then $w^0+\xi<0$ componentwise and the metric
\[
 r_i=2\operatorname{arctanh}(e^{w_i^0+\xi_i})
\]
is real.

Following the finite Dirichlet approximation used above, choose an exhaustion $V_n\nearrow\T_0$ and solve the finite Dirichlet problems
\[
\begin{cases}
 \displaystyle\frac{d\xi_i^{(n)}}{dt}=-K_i(w^0+\xi^{(n)}),
     & i\in\operatorname{Int}(V_n),\\[4pt]
 \xi_i^{(n)}(0)=0,
     & i\in V_n,\\
 \xi_i^{(n)}(t)=0,
     & i\in\partial V_n .
\end{cases}
\]
We extend $\xi^{(n)}$ by zero outside $V_n$.  Since only finitely many variables are free and the initial metric is real, the finite-dimensional ODE has a local solution.  By Lemma \ref{lem:curv-bound},
\[
 |K_i|\le C_M:=4\pi+2\pi D_M
\]
as long as the finite solution is real.  Write $w^{(n)}=w^0+\xi^{(n)}$.  Choose $t_0>0$ so that $C_Mt_0<\rho/2$.  Then
\[
 |\xi_i^{(n)}(t)|\le C_Mt<\rho/2
\]
on $[0,t_0]$.  Thus the finite trajectories stay in the real region and extend to $[0,t_0]$.

It remains to obtain the compactness needed for a $C^1$ limit.  Fix a vertex $i$ and let $S_i$ be the finite set of vertices contained in tetrahedra incident to $i$.  On the compact set determined by $|\xi_a|\le\rho/2$ for $a\in S_i$, the local $w$-coordinates stay in a compact subset of $(-\infty,0)$, and the incident tetrahedra remain in the real hyperbolic region because this set is contained in the real neighborhood $\|\xi\|_{\ell^\infty}\le\rho$.  The solid angles are smooth functions of the local $w$-variables there, so for some local constant $L_i^H<\infty$,
\[
 \left|\frac{\partial K_i}{\partial w_a}\right|\le L_i^H,\qquad a\in S_i .
\]
For all sufficiently large $n$ with $i\in\operatorname{Int}(V_n)$,
\[
 (\xi_i^{(n)})''
 =-\sum_{a\in S_i}\frac{\partial K_i}{\partial w_a}(w^{(n)})(\xi_a^{(n)})',
\]
and $|(\xi_a^{(n)})'|\le C_M$.  Therefore
\[
 |(\xi_i^{(n)})''|\le C_i^H
\]
on $[0,t_0]$, with $C_i^H$ independent of $n$.  Arzela--Ascoli and a diagonal argument give a subsequence converging in $C^1([0,t_0])$ at every vertex to a limit $\xi$.  Since each curvature depends only on finitely many local variables and is smooth on the real region, we may pass to the limit in the integral form of the finite equations and obtain
\[
 \xi_i(t)=-\int_0^t K_i(w^0+\xi(s))\,ds .
\]
Thus $w_i=w_i^0+\xi_i$ satisfies $w_i'=-K_i$ on $[0,t_0]$.  Returning to
\[
 r_i=2\operatorname{arctanh}(e^{w_i})
\]
gives a solution of \eqref{eq:Hflow}.  Since each $K_i$ depends only on finitely many local variables and is smooth in the real region, the limiting equation is a local finite-dimensional smooth ODE; repeated differentiation gives smoothness in time.
\end{proof}

\subsection{Uniqueness of the Hyperbolic Flow}
The uniqueness proof is again an application of the infinite-graph maximum principle, now with the zeroth-order term in the hyperbolic curvature evolution.

\begin{proof}[\textbf{Proof of Theorem \ref{thm:intro-unique}(2)}]
Put $g_i(t)=w_i(t)-\hat w_i(t)$.  For $s\in[0,1]$, set
\[
 w^s=sw+(1-s)\hat w,\qquad
 r_i^s=2\operatorname{arctanh}(e^{w_i^s}).
\]
For fixed $t$, the path $s\mapsto w^s(t)$ consists of real hyperbolic ball-packing metrics by Lemma \ref{lem:Hbounds}, and
\[
 \frac{d}{ds}w_i^s(t)=g_i(t).
\]
Applying Proposition \ref{prop:Hevol} with $s$ as the path parameter gives
\[
 \frac{d}{ds}K_i(w^s(t))
 =-\frac1{\sinh r_i^s(t)}
   \sum_{j:j\sim i}\mu_{ij}(w^s(t))(g_j(t)-g_i(t))
   -h_i(w^s(t))g_i(t).
\]
Hence Newton--Leibniz gives
\[
\begin{aligned}
 K_i(w(t))-K_i(\hat w(t))
 &=\int_0^1\frac{d}{ds}K_i(w^s(t))\,ds\\
 &=-\int_0^1\frac1{\sinh r_i^s(t)}
   \sum_{j:j\sim i}\mu_{ij}(w^s(t))(g_j(t)-g_i(t))\,ds\\
 &\quad-\int_0^1 h_i(w^s(t))g_i(t)\,ds .
\end{aligned}
\]
Since $g_i'=-(K_i(w)-K_i(\hat w))$, we obtain
\[
 g_i'=\sum_{j:j\sim i}\omega_{ij}(t)(g_j-g_i)+q_i(t)g_i,
\]
where
\[
 \omega_{ij}(t)=\int_0^1
 \frac{\mu_{ij}(w^s(t))}{\sinh r_i^s(t)}\,ds,\qquad
 q_i(t)=\int_0^1 h_i(w^s(t))\,ds .
\]
The lower bound and the hyperbolic strong non-degenerate condition I in Theorem \ref{thm:intro-unique}(2) are precisely the hypotheses needed to apply Lemma \ref{lem:Hbounds} uniformly to the interpolating metrics $r^s(t)$.  Therefore $\omega_{ij}\ge0$ and
\[
 \sum_{j:j\sim i}\omega_{ij}(t)\le C,\qquad q_i(t)\le C.
\]
Moreover, $g$ is bounded on $\T_0\times[0,T]$.  Indeed, $g_i(0)=0$ and Lemma \ref{lem:curv-bound} gives $|K_i|,|\hat K_i|\le C_M$, so
\[
 |g_i(t)|\le \int_0^t |K_i(w(s))-K_i(\hat w(s))|\,ds\le 2C_MT .
\]
With $a_i\equiv1$ and the weights $\omega_{ij}(t)$ above, this is an equation of the form $g'=\Delta g+qg$.  Applying Lemma \ref{lem:mp} to $g$ and to $-g$ gives $g\equiv0$.  Therefore $w=\hat w$ and hence $r=\hat r$.
\end{proof}

\section{Long-Time Existence of Extended Flows}
We now prove the global existence results for the extended flows.  The exhaustion and compactness argument follows the standard strategy for infinite combinatorial flows \cite{Ji2025}: finite Dirichlet problems give uniform $C^0$ and Lipschitz bounds in the appropriate conformal variable, Arzela--Ascoli gives a locally uniform limit, and the following elementary regularity lemma identifies the weak time derivative.

\begin{lemma}\label{lem:weak-c1}
Let $f$ be Lipschitz on $[0,T]$ and let $g$ be continuous on $[0,T]$.  If
\[
 \int_0^T f(t)\phi'(t)\,dt=-\int_0^T g(t)\phi(t)\,dt
\]
for every smooth $\phi$ with $\phi(0)=\phi(T)=0$, then $f\in C^1([0,T])$ and $f'=g$.
\end{lemma}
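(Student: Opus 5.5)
The plan is to compare $f$ with the antiderivative of $g$ and show the difference is constant. Put
\[
 G(t)=\int_0^t g(s)\,ds .
\]
By the fundamental theorem of calculus $G\in C^1([0,T])$ and $G'=g$. For any smooth $\phi$ with $\phi(0)=\phi(T)=0$, integration by parts applies because $G$ and $\phi$ are $C^1$, and the boundary terms vanish. This gives
\[
 \int_0^T G\phi'\,dt=-\int_0^T g\phi\,dt .
\]
Subtracting this from the hypothesis, $h=f-G$ is continuous (in fact Lipschitz) and satisfies
\[
 \int_0^T h\phi'\,dt=0
\]
for every such test function $\phi$.

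The main step is the du Bois-Reymond lemma: a continuous $h$ with this property is constant. To prove it, let $\psi$ be smooth with compact support in $(0,T)$, and set $\bar\psi=\frac1T\int_0^T\psi$. Fix a smooth $\eta$ with $\int_0^T\eta=1$ and compact support in $(0,T)$. Define
\[
 \phi(t)=\int_0^t\bigl(\psi(s)-\bar\psi\,T\eta(s)\bigr)\,ds .
\]
Then $\phi$ is smooth, $\phi(0)=0$, and $\phi(T)=\int\psi-\bar\psi T=0$. So $\phi$ is an admissible test function, and
\[
 0=\int h\psi-\bar\psi\,T\int h\eta .
\]
Set $c=T\int h\eta/T=\int h\eta$. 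Then $\int h\psi=c\int\psi$, that is,
\[
 \int_0^T (h-c)\psi\,dt=0
\]
for all smooth compactly supported $\psi$.

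Since $h-c$ is continuous, it must vanish identically on $(0,T)$. Otherwise it would have a fixed sign on some subinterval, and a nonnegative bump $\psi$ supported there gives a contradiction. By continuity, $h\equiv c$ on $[0,T]$.

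It follows that $f=G+c$. Hence $f\in C^1([0,T])$ with $f'=g$, including one-sided derivatives at the endpoints. The Lipschitz hypothesis is only needed to ensure that $f$ is continuous, so that the pointwise identification $h\equiv c$ holds on the closed interval. I expect no real obstacle here; the only care needed is to build a test function with $\phi(T)=0$ from an arbitrary $\psi$, which the mean-correction by $\eta$ handles.
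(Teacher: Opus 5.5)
Your proof is correct, but it takes a different route from the paper. The paper gives no argument of its own and just cites Evans, Section 5.8, where Lipschitz functions are identified with $W^{1,\infty}$ functions. The intended route is then as follows. A Lipschitz $f$ is absolutely continuous, and its a.e.\ derivative is its weak derivative. Testing the hypothesis against $\phi\in C_c^\infty(0,T)$ identifies that weak derivative with $g$. Hence $f(t)=f(0)+\int_0^t g(s)\,ds$, which is $C^1$. On that route the Lipschitz hypothesis does real work: it supplies absolute continuity and the fundamental theorem of calculus for $f$.

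You instead subtract the antiderivative $G$ and prove the du Bois-Reymond lemma directly. The mean-correction by $\eta$ turns an arbitrary $\psi$ into an admissible test function; your $\phi$ even vanishes near both endpoints, so it lies in $C_c^\infty(0,T)$. This argument is fully elementary and avoids Rademacher-type facts and absolute continuity. As you note, it uses only continuity of $f$; for $f\in L^1$ the same argument would give $f=G+c$ almost everywhere.

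The cited route is shorter to state and sits naturally in the Sobolev framework. Yours is self-contained and slightly more general. In the paper's application the limits $u_i$ and $\zeta_i$ are Lipschitz anyway, so either version suffices.

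A cosmetic point: in your definition of $c$, the factor $T/T$ is redundant; simply set $c=\int_0^T h\eta\,dt$.
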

The proof can be found in \cite[Section 5.8]{Evans2010}.

\subsection{The Extended Euclidean Flow}
In Euclidean background geometry the extended flow is
\begin{equation}\label{eq:Eext}
 r_i'=-\wt K_i r_i.
\end{equation}

\begin{proof}[\textbf{Proof of Theorem \ref{thm:intro-extended}(1)}]
Write
\[
 u_i(t)=\log\frac{r_i(t)}{r_i^0}.
\]
Then \eqref{eq:Eext} is equivalent to $u_i'=-\wt K_i(u)$, where $\wt K_i(u)$ is computed from the metric $r_i^0e^{u_i}$.  Choose a finite exhaustion $V_n\nearrow\T_0$.  On $V_n$ solve the finite Dirichlet problem
\[
\begin{cases}
 \displaystyle\frac{d u_i^{(n)}}{dt}=-\wt K_i(u^{(n)}),
     & i\in\operatorname{Int}(V_n),\\[4pt]
 u_i^{(n)}(0)=0,
     & i\in V_n,\\
 u_i^{(n)}(t)=0,
     & i\in\partial V_n ,
\end{cases}
\]
and extend $u^{(n)}$ by zero outside $V_n$.  The extended solid angles are continuous on the positive radius region, so the finite-dimensional vector field is continuous.  Peano's theorem gives a local solution.

For a fixed finite set $V_n$, Lemma \ref{lem:curv-bound} applies to the extended solid angles and gives
\[
 |\wt K_i|\le 4\pi+2\pi d_i,\qquad i\in\operatorname{Int}(V_n).
\]
Hence the finite trajectory is bounded on every finite time interval on which it exists.  Thus, for each fixed vertex $i$ and all sufficiently large $n$,
\[
 |u_i^{(n)}(t)|+|(u_i^{(n)})'(t)|\le C(i,T)
\]
on every finite time interval $[0,T]$.

By the preceding uniform bound and Lipschitz bound, Arzela--Ascoli and a diagonal argument give a subsequence, still denoted by $u^{(n)}$, such that for every vertex $i$ and every $T>0$,
\[
 u_i^{(n)}\to u_i
\]
uniformly on $[0,T]$.  The limit $u_i$ is Lipschitz on $[0,T]$.  It remains to identify its time derivative.

Fix $i$ and choose $n$ large enough that $i\in\operatorname{Int}(V_n)$.  For every smooth $\phi$ with $\phi(0)=\phi(T)=0$,
\[
 \int_0^T u_i^{(n)}\phi'\,dt
 =\int_0^T \wt K_i(u^{(n)})\phi\,dt .
\]
The curvature $\wt K_i$ depends only on the finite tetrahedral star of $i$ and is continuous there.  Since the relevant coordinates of $u^{(n)}$ converge uniformly, dominated convergence gives
\[
 \int_0^T u_i\phi'\,dt
 =\int_0^T \wt K_i(u)\phi\,dt .
\]
Applying Lemma \ref{lem:weak-c1} with $g=-\wt K_i(u)$ gives $u_i\in C^1([0,T])$ and $u_i'=-\wt K_i(u)$.  Since $T$ and $i$ were arbitrary, $u\in C_t^1(\T_0\times[0,\infty))$.  Returning to $r_i(t)=r_i^0e^{u_i(t)}$ gives a global solution of \eqref{eq:Eext}.
\end{proof}

\subsection{The Extended Hyperbolic Flow}
In hyperbolic background geometry the extended flow is
\begin{equation}\label{eq:Hext}
 r_i'=-\wt K_i\sinh r_i.
\end{equation}
Equivalently, in the variable $w_i=\log\tanh(r_i/2)$, it is $w_i'=-\wt K_i$.

\begin{lemma}[Large-radius estimate for extended hyperbolic solid angles]\label{lem:H-large-angle}
For every $\delta>0$, there exists $R(\delta)>0$ such that, for any real or virtual hyperbolic tetrahedron $\tau$ and any vertex $i\in\tau$, if
\[
 r_i\ge R(\delta),\qquad r_i=\max_{a\in\tau}r_a,
\]
then $\wt\alpha_{i,\tau}\le\delta$.
\end{lemma}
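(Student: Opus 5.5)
Since $r_i$ is the largest radius, we have $c_i=\coth r_i=\min_{a\in\tau}c_a$. The tetrahedron therefore cannot lie in the virtual component $D_i^H$, because that would require $c_i\ge c_b+c_c+c_d+2\sqrt{\cdots}>c_i$. So there are only two cases. If $\tau\in D_a^H$ for some $a\neq i$, then $\wt\alpha_{i,\tau}=0$ by definition and there is nothing to prove. Hence we may assume $\tau$ is real, so $\wt\alpha_{i,\tau}=\alpha_{i,\tau}$ is an honest hyperbolic solid angle. The task is to show that this solid angle at the vertex carrying the largest ball tends to $0$ uniformly as $r_i\to\infty$, no matter what the other radii are.

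The plan is to control $\alpha_{i,\tau}$ by the face angles at $i$. In the hyperbolic triangle $\{i,j,k\}$ with sides $r_i+r_j$, $r_i+r_k$, $r_j+r_k$, the inscribed circle is centered at the incenter, touches the sides at distances $r_i,r_j,r_k$ from the respective vertices, and has inradius $\rho_{ijk}$ given by $\tanh^2\rho_{ijk}=\tanh r_i\tanh r_j\tanh r_k/(\tanh r_i+\tanh r_j+\tanh r_k)$. Consequently
\[
 \tan\frac{\theta_i^{jk}}{2}=\frac{\tanh\rho_{ijk}}{\sinh r_i}\le\frac{1}{\sinh r_i},
\]
so every face angle at $i$ is at most $2\arctan(1/\sinh r_i)$. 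This bound does not depend on $r_j,r_k$, and so it is uniform. The solid angle at $i$ is the area of the spherical triangle cut out on the unit sphere of directions at $i$, and the side lengths of that spherical triangle are the three face angles $\theta_i^{jk},\theta_i^{jl},\theta_i^{kl}$. A spherical triangle whose sides are all at most $\eta<\pi/2$ lies in a spherical cap of radius $\eta$ around one of its vertices. Its area is therefore at most $2\pi(1-\cos\eta)$, which tends to $0$ as $\eta\to0$. We choose $R(\delta)$ so that $\eta=2\arctan(1/\sinh R)$ satisfies $2\pi(1-\cos\eta)\le\delta$. The same argument also shows that $\alpha_{i,\tau}\le\delta$ in the real case.

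The main obstacle is justifying the identification between the face angles at $i$ and the sides of the link triangle in the nondegenerate real case, together with the tangent-length formula in hyperbolic geometry. The formula follows from the right triangle formed by the vertex $i$, the incenter, and the tangency point: this triangle has legs $r_i$ and $\rho_{ijk}$, and hyperbolic right-triangle trigonometry gives $\tan(\theta/2)=\tanh\rho/\sinh r_i$. In fact the crude bound $\tanh\rho\le1$ is all we need. As a fallback, one could instead argue by continuity and compactness in the variables $c_a\in[1,\infty)$ near $c_i\to1$, but the explicit face-angle estimate is cleaner and gives the uniformity over the other radii directly.
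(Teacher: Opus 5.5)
Your proof is correct. It differs from the paper only in that the paper gives no argument of its own: it quotes the estimate from Ge--Hua's finite-dimensional theory, whereas you prove it directly.

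Your route has three steps. First, $c_i=\min_a c_a$ excludes $D_i^H$, so $\wt\alpha_{i,\tau}$ is either $0$ or an honest solid angle. Second, the incircle right triangle with legs $r_i$ and $\rho_{ijk}$ gives the uniform face-angle bound $\theta\le 2\arctan(1/\sinh r_i)$, independent of $r_j,r_k$. Third, the link triangle at $i$ lies in a spherical cap of radius $\eta<\pi/2$, so its area is at most $2\pi(1-\cos\eta)$.

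What your approach buys is a self-contained argument with an explicit $R(\delta)$. It also shows that maximality of $r_i$ is used only to exclude $D_i^H$. Even that step is automatic for large $r_i$: since every $c_a=\coth r_a>1$, membership in $D_i^H$ forces $\coth r_i>3+2\cdot 2=7$. The citation route is shorter but hides all of this.

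One small slip: the hyperbolic inradius satisfies $\tanh^2\rho_{ijk}=\sinh r_i\sinh r_j\sinh r_k/\sinh(r_i+r_j+r_k)$. In your $\tanh$ form, the denominator should therefore also contain $\tanh r_i\tanh r_j\tanh r_k$. You only use $\tanh\rho_{ijk}<1$, so this does not affect the proof.
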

This is the large-radius estimate for extended hyperbolic solid angles proved in \cite{GeHua2020}.

\begin{proposition}[Upper bound for finite hyperbolic approximations]\label{prop:Hupper}
Suppose that $\T$ has degree bounded by $M$.  Let $r^0:\T_0\to(0,\infty)$ satisfy $\sup_i r_i^0\le N$.  For a finite set $V\subset\T_0$ and $T>0$, let $r:\T_0\times[0,T]\to(0,\infty)$ be obtained by solving the finite Dirichlet problem
\[
\begin{cases}
\displaystyle \frac{d r_i}{dt}=-\wt K_i(r(t))\sinh r_i(t),
    & i\in\operatorname{Int}(V),\\[4pt]
r_i(0)=r_i^0,
    & i\in V,\\
r_i(t)=r_i^0,
    & i\in\partial V,
\end{cases}
\]
where outside $V$ we set $r_i(t)=r_i^0$.  Then
\[
 r_i(t)\le C(M,N)
\]
for all $(i,t)\in\T_0\times[0,T]$.
\end{proposition}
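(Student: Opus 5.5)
The plan is a discrete maximum-principle argument on the finite set $V$, applied to the largest radius. Set $D_M=\binom{M}{3}$ and pick $\delta>0$ with $D_M\delta<4\pi$, say $\delta=2\pi/D_M$. Let $R=R(\delta)$ be as in Lemma \ref{lem:H-large-angle}, and put
\[
 C(M,N)=\max\{N,R\}.
\]
The claim is that $r_i(t)\le C(M,N)$ for all $(i,t)$.

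Vertices outside $\operatorname{Int}(V)$ are frozen at $r_i^0\le N$, so only interior vertices need attention. Since $V$ is finite, the function
\[
 m(t)=\max_{i\in\T_0}r_i(t)=\max\Bigl\{\max_{i\in V}r_i(t),\ \sup_{i\notin V}r_i^0\Bigr\}
\]
is a maximum of finitely many $C^1$ functions together with a constant bounded by $N$.

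The key pointwise fact is the following. Suppose at some time $t$ a vertex $i\in\operatorname{Int}(V)$ has $r_i(t)=m(t)\ge R$. Then $i$ carries the largest radius in every tetrahedron $\tau$ containing it, so Lemma \ref{lem:H-large-angle} gives $\wt\alpha_{i,\tau}\le\delta$. Summing over the at most $D_M$ incident tetrahedra,
\[
 \wt K_i\ge 4\pi-D_M\delta>0,
\]
and hence $r_i'=-\wt K_i\sinh r_i<0$ at that instant.

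To turn this into a bound, I would argue by contradiction. Suppose $r_{i_0}(t_1)>C$ for some $i_0$ and $t_1$; necessarily $i_0\in\operatorname{Int}(V)$. Let
\[
 t^*=\sup\{t\le t_1:\ m(t)\le C\}.
\]
This is well defined because $m(0)\le N\le C$, and by continuity $m(t^*)=C$ while $m>C$ on $(t^*,t_1]$. Only finitely many functions enter $m$ and the exterior constants are at most $N\le C$. So on $(t^*,t_1]$ the maximum is attained only at interior vertices, and we can choose a vertex $j\in\operatorname{Int}(V)$ and a sequence $s_k\downarrow t^*$ with $r_j(s_k)=m(s_k)>C$. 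Then $r_j(t^*)=C$.

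If $r_j'(t^*)<0$, this contradicts $r_j(s_k)>C=r_j(t^*)$ for $s_k$ arbitrarily close to $t^*$ from above. To guarantee the strict negative sign, I would use strict inequality in Lemma \ref{lem:H-large-angle}'s conclusion together with continuity of $\wt K$. Concretely, at time $t^*$ the vertex $j$ satisfies $r_j(t^*)=C=m(t^*)\ge R$, so the pointwise fact above applies there and gives $r_j'(t^*)<0$.

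A cleaner route is to choose $C$ slightly larger than $\max\{N,R\}$, so that the argument runs entirely in the regime $r\ge R$. Alternatively, one can apply the pointwise fact on an interval $(t^*,t^*+\eta)$ at each maximizing vertex, and use the standard Dini-derivative lemma: the upper Dini derivative of $m$ satisfies $D^+m(t)\le\max_{j:\,r_j(t)=m(t)}r_j'(t)<0$ whenever $m(t)\ge R$. This shows that $m$ cannot exceed $\max\{N,R\}$.

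The main obstacle is exactly this last step. The comparison at a moving maximizer has to be made rigorous with the Dini derivative of a finite maximum. One also has to check that Lemma \ref{lem:H-large-angle} applies to virtual tetrahedra when the maximizer has ties, i.e. $r_i=\max_a r_a$ but not uniquely; the lemma as stated allows non-strict maximum, so I expect this to be fine. The constant then depends only on $M$ (through $D_M$ and $\delta$) and on $N$.
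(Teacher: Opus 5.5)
Your proof is correct and is essentially the paper's argument: a maximum-principle contradiction at a maximizing interior vertex. You use Lemma \ref{lem:H-large-angle} to get $\wt K_j\ge 4\pi-D_M\delta>0$, hence $r_j'<0$, which yields the bound $\max\{N,R(\delta)\}$. The only difference is technical: the paper takes the left derivative at the first time the maximum reaches $R_0+\eta$ and then lets $\eta\downarrow0$, whereas your $t^*$/pigeonhole-subsequence argument uses the right derivative at the last time $m\le C$. That argument already closes the step you flagged as the main obstacle, so the Dini-derivative detour is unnecessary.
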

\begin{proof}
Choose $\delta>0$ so that $D_M\delta\le2\pi$, and let $R(\delta)$ be given by Lemma \ref{lem:H-large-angle}.  Set
\[
 R_0=\max\{N,R(\delta)\}.
\]
We claim that no finite Dirichlet solution can cross the level $R_0$.  Indeed, suppose that it first crosses $R_0+\eta$ for some $\eta>0$ at time $t_0$, and choose a vertex $i\in V$ with maximal radius at $t_0$.  Since boundary vertices and vertices outside $V$ keep their initial radii, this vertex lies in $\operatorname{Int}(V)$.  Moreover, $i$ is a maximal-radius vertex in every incident tetrahedron.  Hence every extended solid angle at $i$ is at most $\delta$, and, since $d_i\le D_M$,
\[
 \sum_{\tau\ni i}\wt\alpha_{i,\tau}\le D_M\delta\le2\pi .
\]
Thus $\wt K_i\ge2\pi$ at $(i,t_0)$, and
\[
 r_i'=-\wt K_i\sinh r_i<0.
\]
This contradicts the first-crossing property.  Letting $\eta\downarrow0$ gives the desired bound with $C(M,N)=R_0$.
\end{proof}

\begin{proof}[\textbf{Proof of Theorem \ref{thm:intro-extended}(2)}]
Let $w_i^0=\log\tanh(r_i^0/2)$ and write $\zeta_i=w_i-w_i^0$.  In this variable the extended hyperbolic flow becomes
\[
 \zeta_i'=-\wt K_i(\zeta),
\]
where the curvature is computed from $r_i=2\operatorname{arctanh}(e^{w_i^0+\zeta_i})$.  Choose a finite exhaustion $V_n\nearrow\T_0$ and solve
\[
\begin{cases}
 \displaystyle\frac{d\zeta_i^{(n)}}{dt}=-\wt K_i(\zeta^{(n)}),
     & i\in\operatorname{Int}(V_n),\\[4pt]
 \zeta_i^{(n)}(0)=0,
     & i\in V_n,\\
 \zeta_i^{(n)}(t)=0,
     & i\in\partial V_n ,
\end{cases}
\]
extending $\zeta^{(n)}$ by zero outside $V_n$.  The finite-dimensional vector field is continuous, so Peano's theorem gives a local solution.

By Lemma \ref{lem:curv-bound} and the degree bound,
\[
 |\wt K_i|\le C_M:=4\pi+2\pi D_M.
\]
Hence
\[
 |(\zeta_i^{(n)})'(t)|\le C_M,\qquad
 |\zeta_i^{(n)}(t)|\le C_Mt .
\]
On each finite Dirichlet problem this prevents $r_i$ from reaching zero in finite time, because $w_i=w_i^0+\zeta_i$ stays finite from below on every finite interval.  Proposition \ref{prop:Hupper} prevents the radii from blowing up, and hence keeps $w_i$ away from $0$.  Therefore the finite trajectory remains in a compact subset of the domain $w_i<0$ on every finite time interval, and the finite Dirichlet solutions extend for all time.

On every finite time interval $[0,T]$, the functions $\zeta_i^{(n)}$ are uniformly bounded and uniformly Lipschitz for each fixed vertex $i$.  Arzela--Ascoli and a diagonal argument give a subsequence converging uniformly on $[0,T]$ at every vertex to a Lipschitz function $\zeta_i$.

Fix $i$ and choose $n$ large enough that $i\in\operatorname{Int}(V_n)$.  For every smooth $\phi$ with $\phi(0)=\phi(T)=0$,
\[
 \int_0^T \zeta_i^{(n)}\phi'\,dt
 =\int_0^T \wt K_i(\zeta^{(n)})\phi\,dt .
\]
The extended curvature at $i$ depends only on the finite tetrahedral star of $i$ and is continuous for positive radii.  For this fixed local star, the lower bounds from $w_a^0+\zeta_a^{(n)}\ge w_a^0-C_MT$ and the upper bound in Proposition \ref{prop:Hupper} keep all relevant radii in a compact subset of the positive radius region on $[0,T]$.  Since the relevant coordinates of $\zeta^{(n)}$ converge uniformly, dominated convergence gives
\[
 \int_0^T \zeta_i\phi'\,dt
 =\int_0^T \wt K_i(\zeta)\phi\,dt .
\]
By Lemma \ref{lem:weak-c1}, $\zeta_i\in C^1([0,T])$ and $\zeta_i'=-\wt K_i(\zeta)$.  Since $T$ and $i$ were arbitrary, returning to
\[
 r_i(t)=2\operatorname{arctanh}(e^{w_i^0+\zeta_i(t)})
\]
gives a global $C_t^1$ solution of \eqref{eq:Hext}.
\end{proof}

\noindent\textbf{Acknowledgements.}
I am deeply grateful to my supervisor, Professor Bobo Hua, for his constant encouragement, generous support, and insightful guidance throughout this research.

\end{document}